\documentclass[final,3p]{elsarticle}
\usepackage{subcaption}
\usepackage[colorlinks=true,breaklinks=true,pdftex]{hyperref}

\journal{GMP 2026}

\biboptions{authoryear}

	\usepackage{amsmath,amsthm,amssymb}
	\usepackage{graphicx}
	\usepackage{bm}
	\usepackage{array}
	\usepackage{booktabs}
	\usepackage{multirow}
	\usepackage{siunitx}
	\usepackage{caption}
	\usepackage{float}
	\usepackage{mathdots}
	\usepackage{color}

	\def\RR{\mathbb{R}}
	\def\NN{\mathbb{N}}

	\newcommand{\as}{\mathrel{:=}}
	\newcommand{\sa}{\mathrel{=:}}
	
	\newcommand{\intbox}{%
		{\!\;\fboxsep0pt\framebox{\phantom{$I$\!\!\,}}\!\;}}
	
	\newcommand{\bfm}{\bm{m}}
	
	\newcommand{\bfr}{\bm{r}}
	
	\newcommand{\bfx}{\bm{x}}
	\newcommand{\bfxi}{\bm{\xi}}
	
	\newcommand{\bemph}[1]{\emph{\bfseries#1}}
	
	\newcommand{\iRR}{\intbox\RR}
	\newcommand{\intboxTaylor}{\intbox^{\mathrm{T}}}
	\newcommand{\intboxLagrange}{\intbox^{\mathrm{L}}}
	\newcommand{\intboxHermite}{\intbox^{\mathrm{H}}}
	
	\newcommand{\OmegaLagrange}{\Omega_{\mathrm{L}}}
	\newcommand{\errorLagrange}{E^{\mathrm{L}}}
	\newcommand{\TaylorLagrange}{T^{\mathrm{L}}}
	\newcommand{\remainderLagrange}{R^{\mathrm{L}}}
	
	\newcommand{\OmegaHermite}{\Omega_{\mathrm{H}}}
	\newcommand{\errorHermite}{E^{\mathrm{H}}}
	\newcommand{\TaylorHermite}{T^{\mathrm{H}}}
	\newcommand{\remainderHermite}{R^{\mathrm{H}}}
	
	\newcommand{\Delannoy}[2]{\begin{bmatrix}#1\\#2\end{bmatrix}}
	\newcommand{\textDelannoy}[2]{%
		\bigl[\!\begin{smallmatrix}#1\\#2\end{smallmatrix}\!\bigr]}

	\DeclareMathOperator{\Hausdorff}{\mathit{d}_{\mathrm{H}}}
	
	\providecommand{\abs}[1]{{\lvert{#1}\rvert}}

	\providecommand{\floor}[1]{{\lfloor{#1}\rfloor}}
    
    \definecolor{mygreen}{RGB}{0,191,0}

	\providecommand{\xKH}[1]{} 
	\providecommand{\xCY}[1]{} 
	\providecommand{\xBW}[1]{} 

    \newcommand{\rev}[1]{{\color{red}#1}}

	\newcommand{\ignore}[1]{}
	
	\newcommand{\totalTime}{\mathrm{TotalTime}}
	\newcommand{\totalWidth}{\mathrm{TotalWidth}}
	\newcommand{\cored}[1]{\textcolor{mygreen}{#1}}
	\theoremstyle{plain}
	\newtheorem{theorem}{Theorem}

	\theoremstyle{definition}

\begin{document}

\begin{frontmatter}

\title{Bivariate range functions with superior convergence order\footnote{ To appear in GMP 2026 Proceedings.  This arXiv version has additional experimental data in Appendix C. } }
\author[nyu]{Bingwei Zhang}
\author[nyu]{Thomas Chen}
\author[usi]{Kai Hormann}
\author[nyu]{Chee Yap}
\address[nyu]{Courant Institute, New York University}
\address[usi]{Facolt\`{a} di Scienze Informatiche, Universit\`{a} della Svizzera italiana}

\begin{abstract}
	Range functions are a fundamental tool for certified computations in
	geometric modeling, computer graphics, and robotics,
	but traditional range functions have only quadratic convergence order
	($m=2$).  For ``superior'' convergence order (i.e., $m>2$), we exploit the
	Cornelius--Lohner framework in order to
	introduce new bivariate range functions based
	on Taylor, Lagrange, and Hermite interpolation.
	In particular, we focus on practical
	range functions with cubic and quartic convergence order.
	We implemented them in Julia and provide experimental
	validation of their performance in terms
	of efficiency and efficacy.

\ignore{ Following is more appropriate for intro? 
	They provide
	mathematically rigorous bounds on the range of a function over any
	specified domain, which are crucial for developing robust algorithms
	for collision detection, intersections tests, and many other tasks
	that require safe decisions. The simplest approach is to evaluate a
	given function with interval arithmetic, resulting in the natural
	interval extension of the function, but tighter enclosures of the
	exact range can be computed with centered forms. While the latter
	exhibit quadratic convergence order, the goal of this paper is to
	derive novel bivariate range functions with higher convergence orders.
	}
\end{abstract}

\begin{keyword}
Range function \sep Convergence order \sep Interval arithmetic
\end{keyword}

\end{frontmatter}



\section{Introduction}
	Many problems in geometric modeling, computer graphics,
	and robotics, such as surface-surface intersection,
	constructive solid geometry (CSG) operations,
	ray-tracing, and
	collision detection, can be solved robustly
	using interval analysis and range (or inclusion) functions
	\citep{Bowyer:2000:IMI, Snyder:1992:IAF, Duff:1992:IAA}.
	The core idea of range
	functions is to calculate an interval that is guaranteed
	to enclose the range of a function $f$ over any given domain $B$,
	where $B$ is typically a box, simplex, or ball in Euclidean space
	\citep{Ratschek:1984:CMF}.  
	Range functions provide various certified predicates for finding
	roots of $f$.  One such predicate is the exclusion predicate of root
	finding: if the estimated range of $f$ over $B$ does \emph{not} contain zero,
	then $f$ has no roots in $B$.  
    With good range
	functions, provably near-optimal complexity bounds can be achieved
	for isolating real and complex roots
	\citep{Sharma:2012:NOT,Becker:2018:ANS}.  Combined with subdivision,
	range functions lead to very efficient branch-and-bound algorithms
	for the problems mentioned before \citep{Martin:2002:COI,Sichetti:2025:MAD}.
    In particular, \emph{bivariate} range functions have been used 
    for the isotopic approximation of curves \citep{Plantinga:2004:IAO,cxy,cxyz}, 
    and they can be used for finding the roots of mappings from $\RR^2$ to $\RR^2$
    \citep{Cheng:2023:CNR}.

	Clearly, the efficiency of these algorithms depends on
	(1) the efficiency of
	evaluating the range functions and
	(2) the efficacy (or tightness) of the estimated ranges.  
	A simple measure of tightness is the order of convergence.
	The classic approach to range functions for $f$ is based on
	the Taylor expansion of $f$ to some degree $n\ge 1$.
	Unfortunately, the order of convergence
	remains quadratic ($m=2$), regardless of $n$
	\citep{Ratschek:1984:CMF}.
	\citet{Cornelius:1984:CTR} introduced the ``CL framework''
	that finally achieves
	{\em superior}
	convergence orders (i.e., $m>2$). While
	they can only reach convergence orders of up to $m\le 6$, 
	due to the underlying implicit computational model,
    \citet[Theorems B and C]{Hormann:2023:RFO} 
	showed how to achieve convergence order $m$ for any $m>1$
	by generalizing the CL framework.

	
	Superior convergence orders have an efficiency cost.
	So in any application, one must choose a ``sweet spot''
	convergence order that achieves practical trade-offs
	among efficiency, efficacy, and ease of implementation. 
	Typically, the sweet spot is some small $m$, say $m<6$. 
	E.g., for the EVAL root isolation algorithm 
		\citep{Hormann:2021:NRF},
	such considerations led to the sweet spot order of $m=3$.
	Thus, in our current
	implementations, we focus on convergence orders up to $m\le 4$.
	Although our techniques are in principle able to reach
	larger values of $m$, it is clear from our development
	below that their implementation will be much more involved.

	In this paper, we focus on the bivariate setting
	and derive new range functions that advance the state of the art
	in terms of convergence order.
	Specifically, we introduce bivariate range functions with
	cubic and quartic convergence orders that are
	based on Taylor, Lagrange, and Hermite interpolation.
	The geometric modeling literature has a
	well-known ``Bernstein range'' 
	for any polynomial $f$ over a box $B$, namely the interval
	between the minimum and maximum Bernstein coefficients over $B$
	\citep{farin:bk}.
	But our experiments will not compare this
	Bernstein range function with our range functions.
	We offer two justifications for this omission.
	First, the Bernstein approach is fundamentally different from the
	interpolation approach of our methods, since it works
	only for polynomials and we know of no range functions
	based on the Bernstein approach with 
	a superior convergence order.
	Second, the comprehensive study of
	\citet{Martin:2002:COI}
	compared a set of 30 range functions for 
	tracing planar curves, and they concluded that 
	the Bernstein method performs comparably to the 
	centered form Taylor method, but the latter is easier to implement.
	This centered form Taylor method is just $\intbox^T_{2,n}$
	in our experiments.
	
\ignore{
	\subsection{Brief Literature Review}
	We noted that the CL Framework can only achieve
	a convergence order of $k\le 6$.  
	... go into the generalization of CL framework...

}

	\subsection{Range functions}
	For any bivariate real function $f\colon\RR^2\to\RR$ and any
	$S\subseteq\RR^2$, the \emph{range} of $f$ on $S$ is the set
	$f(S)\as\{f(x):x\in S\}$. With $\iRR^2$ denoting the set of closed,
	axis-aligned rectangles, also called \emph{boxes} and $\iRR$ denoting
	the set of closed intervals, we call $\intbox f\colon\iRR^2\to\iRR$ a
	\emph{range function} for $f$, if $\intbox f(B)\supseteq f(B)$ for
	all $B\in\iRR^2$. For any box $B=[a_x,b_x]\times[a_y,b_y]\in\iRR^2$,
	the \emph{midpoint}, \emph{radius}, and (maximal) \emph{width} of $B$
	are defined as $\bfm=(m_x,m_y)\as(a_x+b_x,a_y+b_y)/2$,
	$\bfr=(r_x,r_y)\as(b_x-a_x,b_y-a_y)/2$, and
	$w(B)\as2\max(r_x,r_y)$.We further define the \emph{width} and the
	\emph{magnitude} of any $I=[a,b]\in\iRR$ as $w(I)\as b-a$ and
	$\abs{I}\as\max\{\abs{a},\abs{b}\}$.

	For any $m\ge1$, we say that the range function $\intbox f$ exhibits
	\emph{order $m$ convergence} on $B_0\in\iRR^2$, if there exists a
	constant $C>0$ that may depend on $f$ and $B_0$, such that
		\begin{equation}\label{eq:order-m-convergence}
		  \Hausdorff( f(B), \intbox f(B) ) \le C {w(B)}^m
		\end{equation}
	for all $B\subset B_0$, where
	$\Hausdorff([a,b],[a',b'])\as\max\{\abs{a-a'},\abs{b-b'}\}$ denotes
	the Hausdorff distance between intervals.

	For a rational function $f$, given in terms of a specific
	\emph{expression} (or \emph{formula}) $E(x,y)$, the simplest range
	function over $B=I\times J$ with $I,J\in\iRR$ is the \emph{natural
	interval extension} \citep[Section~3.3]{Moore:1979:MAA} $\intbox
	f(B)\as E(I,J)$, obtained by replacing the variables with intervals
	and utilizing interval arithmetic during the evaluation of the
	expression $E$. If $\intbox f(B)$ is well-defined for all $B\subseteq
	B_0$, then $\intbox f$ is convergent over $B_0$
	\citep[Chapter~4]{Moore:1979:MAA}, but the convergence is only of
	order 1 in general. Quadratic convergence is instead achieved by
	\emph{centered forms}, for example, the celebrated \emph{mean-value
	form} \citep[Chapter~3]{Ratschek:1984:CMF}.

	The breakthrough to achieve range functions with
	superior convergence comes from the CL Framework
	\citep{Cornelius:1984:CTR}.
	The idea is to split $f$ into two parts
	$f(x)=g(x)+R_g(x)$ where $g$ is called the \emph{exact part}
	and $R_g\as f-g$ the \emph{remainder part},
	and for any interval $I$, we compute $g(I)$ exactly
	while $R_g(I)$ can be approximated by some range
	function $\intbox R_g(I)$:

	\begin{theorem}[\protect{\citeauthor{Cornelius:1984:CTR}, \citeyear{Cornelius:1984:CTR}, Theorem~4}]\label{theorem:CL-form}
		If the function $f\colon\RR\to\RR$ is continuous over
		$I_0\in\iRR$ and can be written as
		$f(x)=g(x)+R_g(x)$ for any $x\in I_0$,
		then the range function
			\begin{equation}\label{eq:CL-form}
				\intbox f(I) \as g(I) + \intbox R_g(x)
			\end{equation}
		satisfies
			\begin{equation*}
			  \Hausdorff( f(I), \intbox f(I) ) \le w(\intbox R_g(I))
			\end{equation*}
			for any $I\subseteq I_0$.
	\end{theorem}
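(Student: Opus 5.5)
The plan is to read the right-hand side of \eqref{eq:CL-form} as $g(I)+\intbox R_g(I)$, the Minkowski sum of the exact range $g(I)=[g_-,g_+]$ with the interval $\intbox R_g(I)=[\rho_-,\rho_+]\supseteq R_g(I)$. I also assume, as the statement implicitly requires, that $g$ is continuous on $I_0$, so that $g(I)$ is a closed interval. (This follows if $R_g$ is continuous, since $f$ is; it is needed anyway for $g(I)$ to be an interval.) Write $f(I)=[f_-,f_+]$, which is a closed interval because $f$ is continuous and $I$ is compact. Then $\intbox f(I)=[g_-+\rho_-,\,g_++\rho_+]$.

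The first step is inclusion, which gives one-sided bounds. For every $x\in I$ we have $f(x)=g(x)+R_g(x)\in g(I)+R_g(I)\subseteq g(I)+\intbox R_g(I)$. Hence $g_-+\rho_-\le f_-$ and $f_+\le g_++\rho_+$, so $\intbox f$ is indeed a range function.

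The second step bounds each endpoint gap from the other side. Choose $x^*\in I$ with $g(x^*)=g_-$. Then
\[
f_-\le f(x^*)=g_-+R_g(x^*)\le g_-+\rho_+ .
\]
Combined with the first step this gives
\[
0\le f_--(g_-+\rho_-)\le \rho_+-\rho_-=w(\intbox R_g(I)).
\]
Symmetrically, choose $x^{**}$ with $g(x^{**})=g_+$. Then $f_+\ge g_++R_g(x^{**})\ge g_++\rho_-$, so
\[
0\le (g_++\rho_+)-f_+\le w(\intbox R_g(I)).
\]
Taking the maximum of the two endpoint distances yields $\Hausdorff(f(I),\intbox f(I))\le w(\intbox R_g(I))$ for every $I\subseteq I_0$.

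The only delicate point is the existence of the points $x^*$ and $x^{**}$ at which $g$ attains its extreme values on $I$. This rests on the continuity of $g$ on the compact interval $I$, which I would state explicitly. If one instead interprets $g(I)$ as the closure of the image, the same argument goes through with an $\varepsilon$-approximation of the extremes, letting $\varepsilon\to0$. Everything else is elementary interval arithmetic.
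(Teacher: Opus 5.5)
Your proof is correct. The two-sided sandwich $g_\pm+\rho_-\le f_\pm\le g_\pm+\rho_+$ at both endpoints is the standard argument behind this result. The paper does not prove the theorem itself; it cites Cornelius and Lohner, and the same reasoning carries over to Theorem~\ref{theorem:CL-form-generalized}.

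Your repairs to the statement as printed are also right: reading $\intbox R_g(x)$ as $\intbox R_g(I)$, and assuming explicitly that $g$ attains (or can be approximated to) its extremes on $I$. Boundedness of $g$ on $I$ needs no extra assumption, since $g=f-R_g$ with $f$ continuous and $R_g(I)\subseteq\intbox R_g(I)$.
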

	In particular, the convergence order of $\intbox f(I)$
	is that of $\intbox R_g(I)$, and we know how to
	achieve superior convergence orders for $\intbox R_g$.
	The problem with \eqref{eq:CL-form}
	is that there are no\footnote{
			\citet{Cornelius:1984:CTR}
			assumed that if we have an algebraic expression
			for $g(x)$, then we can compute its roots exactly. 
			E.g., if $g(x)$ is a polynomial of degree at most $4$,
			then we can compute its roots.  Although
			this is a common mathematical model,
			there are no practical exact implementations.
	}
	practical arithmetic models
	(standard models of Numerical Analysis, IEEE standard,
	bigNumber packages, etc.) for computing $g(I)$ exactly,
	except for very special $g$.
	E.g., $g(x)=3x^2+1$ is okay, but $g(x)=3x^2+x$ is not.
	  For that reason, \citet{Hormann:2023:RFO}
	proposed to allow for $g$ to be approximated by a range function
	$\intbox g$.

	\begin{theorem}[\protect{\citeauthor{Hormann:2023:RFO}, \citeyear{Hormann:2023:RFO}, Theorem~B}]\label{theorem:CL-form-generalized}
		If the function $f\colon\RR^2\to\RR^2$ is continuous over
		$B_0\in\iRR^2$ and can be written as
		$f(\bfx)=g(\bfx)+R_g(\bfx)$ for any $\bfx\in B_0$,
		then the range function
			\begin{equation}\label{eq:CL-form-generalized}
				\intbox f(B) \as \intbox g(B)+\intbox R_g(B)
			\end{equation}
		satisfies
			\begin{equation*}
			  \Hausdorff( f(B), \intbox f(B) )
			  		\le \Hausdorff( g(B),\intbox g(B))+w(\intbox R_g(B))
			\end{equation*}
			for any $B\subseteq B_0$.
	\end{theorem}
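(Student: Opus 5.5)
Write $f(B)=[\underline f,\overline f]$, $g(B)=[\underline g,\overline g]$, $\intbox g(B)=[\underline G,\overline G]$ and $\intbox R_g(B)=[\underline R,\overline R]$. By interval addition, $\intbox f(B)=[\underline G+\underline R,\overline G+\overline R]$. Here $f$ is real-valued, since the target space is intervals. Since $f$ and $g$ are continuous on the compact box $B\subseteq B_0$, these ranges are closed intervals. (Continuity of $R_g=f-g$ then also holds, so $g$ should be taken continuous; if only $f$ is, I would work with suprema and infima instead, and the argument is unchanged.) The Hausdorff distance is the maximum of the two endpoint distances, so it suffices to bound $\abs{\overline f-(\overline G+\overline R)}$ by $\abs{\overline g-\overline G}+(\overline R-\underline R)$. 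The lower endpoint is symmetric.

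\textbf{Upper bound on $\overline f$.} For every $\bfx\in B$ we have $f(\bfx)=g(\bfx)+R_g(\bfx)\le\overline g+\overline R$, because $R_g(B)\subseteq\intbox R_g(B)$. Hence $\overline f\le\overline g+\overline R$. Moreover $\overline g\le\overline G$ since $\intbox g$ is a range function, so $\overline f\le\overline G+\overline R$.

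\textbf{Lower bound on $\overline f$.} Choose $\bfx^*\in B$ with $g(\bfx^*)=\overline g$. Then $\overline f\ge f(\bfx^*)=\overline g+R_g(\bfx^*)\ge\overline g+\underline R$.

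\textbf{Combining.} These two bounds give
\[
0\le(\overline G+\overline R)-\overline f\le(\overline G-\overline g)+(\overline R-\underline R)\le\Hausdorff(g(B),\intbox g(B))+w(\intbox R_g(B)).
\]
The same argument applies to the minima, using a minimizer of $g$ and the inclusion $f\ge\underline g+\underline R$. Taking the maximum of the two endpoint bounds proves the claim.

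\textbf{Main obstacle.} The only delicate point is the lower bound on $\overline f$. It has to be taken at a maximizer of $g$, not of $f$, so that the exact part's endpoint appears. This is also where existence of extrema, via compactness and continuity, is used. When $g$ is the exact part ($\intbox g=g$), the argument reduces to the proof of Theorem~\ref{theorem:CL-form}.
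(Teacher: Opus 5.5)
Your proof is correct. The paper does not actually prove this statement. It quotes the univariate Theorem~B of Hormann, Yap, and Zhang and only asserts that the argument ``carries over to the multivariate setting without major changes.'' Your direct endpoint comparison supplies exactly that verification, and it shows why the dimension is irrelevant. The only properties of $B$ you use are compactness and connectedness, so that $f(B)$ is a closed interval and the extremes of $g$ are attained (or approximated via suprema and infima). Beyond that you need only the two inclusions $g(B)\subseteq\intbox g(B)$ and $R_g(B)\subseteq\intbox R_g(B)$.

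A natural alternative, suggested by the paper stating Theorem~\ref{theorem:CL-form} first, is to reduce to the Cornelius--Lohner bound via the triangle inequality:
\[
\begin{aligned}
\Hausdorff\bigl(f(B),\intbox g(B)+\intbox R_g(B)\bigr)
&\le \Hausdorff\bigl(f(B),g(B)+\intbox R_g(B)\bigr)\\
&\quad+ \Hausdorff\bigl(g(B)+\intbox R_g(B),\intbox g(B)+\intbox R_g(B)\bigr).
\end{aligned}
\]
The second term equals $\Hausdorff(g(B),\intbox g(B))$, because adding the same interval shifts corresponding endpoints by the same amounts. The first term is at most $w(\intbox R_g(B))$ by Theorem~\ref{theorem:CL-form}. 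That route is more modular, but it presupposes the bivariate version of Theorem~\ref{theorem:CL-form}. The proof of that version is precisely your ``lower bound at a maximizer of $g$'' step, so your argument is self-contained and proves both results at once.

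Your reading of the typo $f\colon\RR^2\to\RR^2$ as a real-valued $f$ is right. When only $f$ is assumed continuous, as in the statement, $g(B)$ should be read as its hull $[\inf g,\sup g]$ in the Hausdorff term. This is what your supremum/infimum remark amounts to.
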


    Note that while \citet{Hormann:2023:RFO} proved this theorem 
    for univariate functions, their proof carries over to the
	multivariate setting without major changes and is stated here 
    for bivariate functions, as needed below.
	By~\eqref{eq:order-m-convergence}, an immediate consequence of
	Theorem~\ref{theorem:CL-form-generalized}
	is that the
	\emph{generalized Cornelius--Lohner
	form}~\eqref{eq:CL-form-generalized}
	has order-$m$ convergence, 
	provided we ensure
    \begin{equation}\label{eq:strong-range-fn}
			\Hausdorff( g(B),\intbox g(B))\le w(\intbox R_g(B)) \le C {w(B)}^m.
			\end{equation}
	For a large class of functions, it is possible to compute
	range functions $\intbox g(B)$ that satisfy
	\eqref{eq:strong-range-fn}
    \citep{Hormann:2023:RFO}.
	E.g., if $g(x)$ is a polynomial, then we need the ability
    to approximate the roots $\xi$ of the derivative
    $g'$ to any desired error $\delta>0$.  Then, for any $\varepsilon>0$,
    we can determine $\delta=\delta(\varepsilon)$
    so that $|\xi-\tilde\xi|\le\delta$ implies
    $|g(\tilde\xi)-g(\xi)|\le \varepsilon$.  The values $g(\tilde\xi)$ can then be used to determine $g(I)$ to within $\varepsilon$.
	\ignore{
	The most common choice is to use an interpolating polynomial with
	small degree $d$ for the exact part $g$, because closed-form
	expressions exist for such polynomials ($d\le5$ in the univariate and
	$d\le3$ in the bivariate setting), and it is possible to define a
	range function $\intbox R_g$ for the remainder that
	satisfies~\eqref{eq:remainder-condition} for $m=d+1$.
	}
	\subsection{Contributions and overview}
	In this paper, we propose and discuss three novel
	bivariate 
	range functions
	based on the generalized Cornelius--Lohner framework. The first
	(Section~\ref{sec:Taylor-forms}) stems from the Taylor expansion of
	$f$ about the midpoint of the interval and generalizes the
	quadratically convergent centered Taylor forms
	\citep[Section~3.5]{Ratschek:1984:CMF}. The second
	(Section~\ref{sec:Lagrange-forms}) and third
	(Section~\ref{sec:Hermite-forms}) apply the idea of Cornelius and
	Lohner recursively and are inspired by the corresponding univariate
	constructions in \citep{Hormann:2021:NRF} and
	\citep{Hormann:2023:RFO}, respectively. Our numerical experiments (Section~\ref{sec:experiments}) confirm the theoretically proven 
    convergence orders of the proposed range functions and identify 
    those with cubic convergence order ($m=3$) as the sweet spot, that 
    is, the best compromise between efficiency and efficacy.
    In the appendices, we provide exact expressions for the endpoints of $g(B)$ for low degree polynomials $g$ in one variable (\ref{app:A}) or two variables (\ref{app:B}).
	
	\xKH{add a summary of the numerical results and some conclusions that
	follow from them}

\section{Range functions based on centered Taylor expansions}
\label{sec:Taylor-forms}
	Abbreviating the higher order \emph{partial derivatives} of
	$f\colon\RR^2\to\RR$ by
	\[
	  f^{(i,j)} \as D^{(i,j)} f
	  	= \frac{\partial^{i+j}}{\partial x^i \partial y^j} f,
	\]
	and denoting the \emph{$k$-th differential} of $f$ by
	\[
	  d^k f
	    \as \biggl( h_x \frac{\partial}{\partial x}
			+ h_y \frac{\partial}{\partial y} \biggr)^k f
	     =  \sum_{j=0}^k \binom{k}{j} h_x^{k-j} h_y^j f^{(k-j,j)},
	\]
    we recall \emph{Taylor's theorem} \citep{Courant:1989:ITC}, by which
	any $(n+1)$-times continuously differentiable function $f$, where
	$n\in\NN_0$, can be expressed as the \emph{Taylor expansion}
	\[
	  f(x,y) = T_n(x,y) + R_n,
	\]
	that is, as the sum of the \emph{$n$-th degree Taylor polynomial} of
	$f$ about $(u,v)$,
	\begin{equation}\label{eq:Taylor-polynomial}
	  T_n(x,y) = f(u,v) + \sum_{k=1}^n \frac{1}{k!}
	  	d^k f(u,v) \qquad\text{with}\qquad h_x = x-u, \quad h_y = y-v,
	\end{equation}
	and the \emph{remainder}
	\[
	  R_n = \frac{1}{(n+1)!} d^{n+1} f(u + \theta h_x, v + \theta h_y)
	\]
	for some $\theta\in(0,1)$ (see Figure~\ref{fig:Taylor-interpolation}).
	
    \begin{figure}[t!]\centering\small
      \setlength{\unitlength}{0.1\linewidth}
      \begin{picture}(10,6)
        \put(0,1.5){\makebox(0,0)[bl]{\includegraphics[width=3\unitlength]{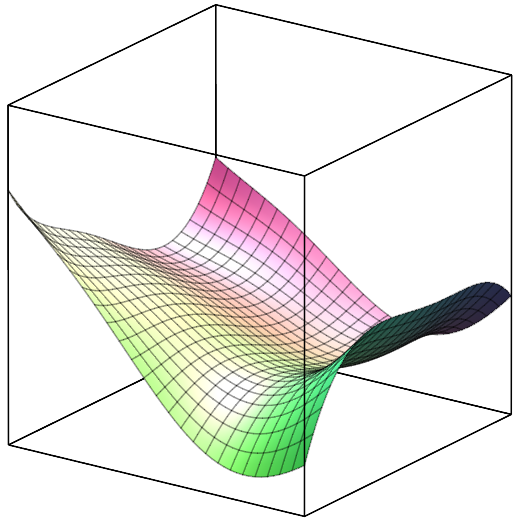}}}
        \put(0.1,1.6){\makebox(0,0)[bl]{$f$}}
        \put(3.5,3){\makebox(0,0)[bl]{\includegraphics[width=3\unitlength]{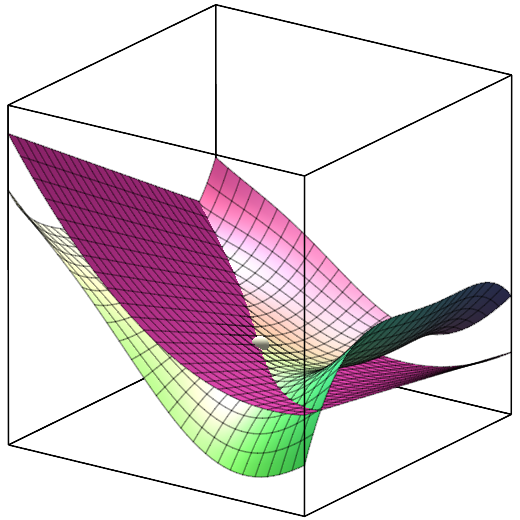}}}
        \put(3.6,3.1){\makebox(0,0)[bl]{$T_2$}}
        \put(3.5,0){\makebox(0,0)[bl]{\includegraphics[width=3\unitlength]{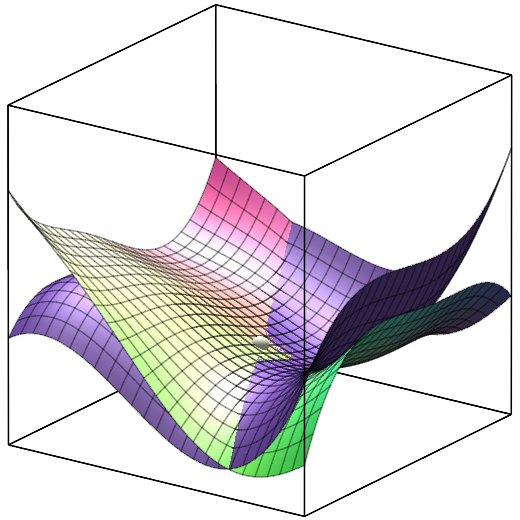}}}
        \put(3.6,0.1){\makebox(0,0)[bl]{$T_3$}}
        \put(7,3){\makebox(0,0)[bl]{\includegraphics[width=3\unitlength]{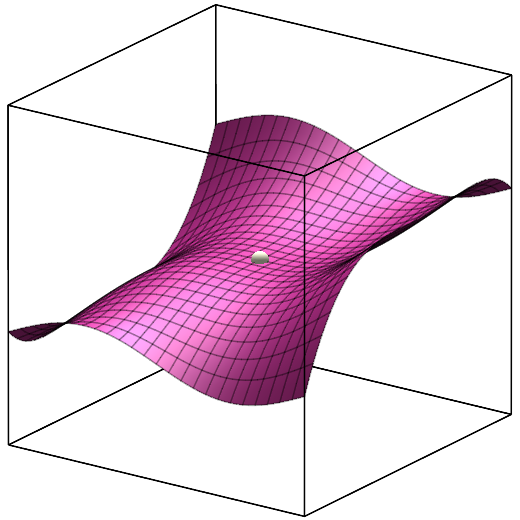}}}
        \put(7.1,3.1){\makebox(0.1,0.1)[bl]{$R_2$}}
        \put(7,0){\makebox(0,0)[bl]{\includegraphics[width=3\unitlength]{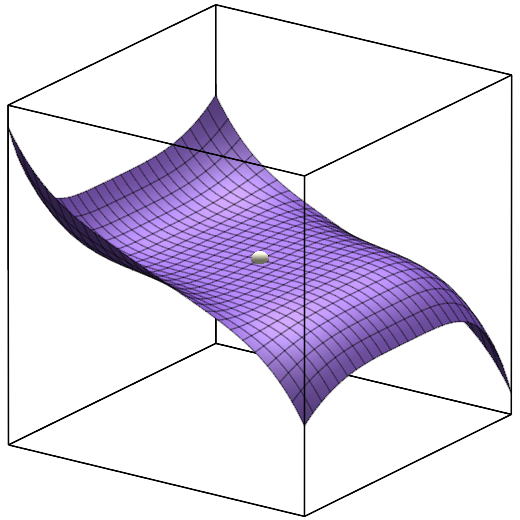}}}
        \put(7.1,0.1){\makebox(0,0)[bl]{$R_3$}}
      \end{picture}
	   \caption{Graph of the function $f(x,y)=12xy^4+12x^2+6xy+4\sin(\pi x)+3\sin(\pi y)-15$ over $[-1,1]^2$ (left), its quadratic and cubic Taylor polynomials about the midpoint $(0,0)$ (centre), and the corresponding remainders (right). The vertical range of all plot boxes is $[-30,30]$.}
	   \label{fig:Taylor-interpolation}
	\end{figure}
    
	Now let $B_0\in\iRR^2$ and $w_0=w(B_0)$. For any $B\subseteq B_0$ and
	$m,n\in\NN$ with $m\le n$, let us consider the Taylor expansion of
	$f$ about the midpoint $\bfm$ of $B$ and split off the first $m$
	terms from the Taylor polynomial $T_{n-1}$. For any $\bfx=(x,y)\in
	B$, we then have
	\[
	  f(\bfx) = T_{m-1}(\bfx)
	          + \sum_{k=m}^{n-1} \frac{1}{k!} d^k f(\bfm)
	          + \frac{1}{n!} d^n f(\bfxi_{\bfx})
	\]
	for some $\bfxi_{\bfx}\in B$ that depends on $\bfx$.
	Assume for simplicity that $B$ is a \emph{square} with $r\as
	r_x=r_y$. Since
	\[
	  h_x = x-m_x \in r [-1,1], \qquad
	  h_y = y-m_y \in r [-1,1],
	\]
	we can estimate the exact range of $f$ over $B$ by the \bemph{Taylor
	form of order $m$ and level $n$},
	\begin{equation}\label{eq:Taylor-form}
	  \intboxTaylor_{m,n} f(B) \as T_{m-1}(B) + r^m [-1,1] S_{m,n},\qquad
	  S_{m,n} \as \sum_{k=m}^n s_k r^{k-m},
	\end{equation}
	where the $s_k$ are defined as
	\[
	  s_k \as \frac{1}{k!} \sum_{j=0}^k \binom{k}{j}
	  	\abs{f^{(k-j,j)}(\bfm)}, \quad k=m,\dots,n-1, \qquad
	  s_n \as \frac{1}{n!} \sum_{j=0}^n \binom{n}{j}
	  	\abs{\intbox f^{(n-j,j)}(B)}
	\]
	and $\intbox f^{(n-j,j)}$, $j=0,\dots,n$ are bounded range functions
	over $B_0$.
	
	\begin{theorem}\label{theorem:Taylor-form}
	The Taylor form $\intboxTaylor_{m,n} f$ in~\eqref{eq:Taylor-form} has
	order $m$ convergence.
	\end{theorem}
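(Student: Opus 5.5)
The plan is to read \eqref{eq:Taylor-form} as a generalized Cornelius--Lohner form and apply Theorem~\ref{theorem:CL-form} (or Theorem~\ref{theorem:CL-form-generalized} with $\intbox g(B)=g(B)$). For $\bfx\in B$ we split $f(\bfx)=g(\bfx)+R_g(\bfx)$. The exact part is $g\as T_{m-1}$, the Taylor polynomial about $\bfm$. The remainder part is
\[
  R_g(\bfx)=\sum_{k=m}^{n-1}\frac{1}{k!}d^kf(\bfm)+\frac{1}{n!}d^nf(\bfxi_{\bfx}).
\]
The form $\intboxTaylor_{m,n}f(B)$ is then exactly $g(B)+\intbox R_g(B)$ with $\intbox R_g(B)\as r^m[-1,1]S_{m,n}$. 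Taking $g(B)$ exact is what the definition \eqref{eq:Taylor-form} does.

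The first step is to check that $\intbox R_g$ really is a range function for $R_g$ on $B$. Since $\abs{h_x},\abs{h_y}\le r$, every term of $d^kf(\bfm)$ satisfies
\[
  \binom{k}{j}\abs{h_x}^{k-j}\abs{h_y}^j\abs{f^{(k-j,j)}(\bfm)}\le r^k\binom{k}{j}\abs{f^{(k-j,j)}(\bfm)}.
\]
Hence $\abs{\tfrac1{k!}d^kf(\bfm)}\le s_kr^k$ for $m\le k\le n-1$. For the last term, $\bfxi_{\bfx}\in B$ gives $f^{(n-j,j)}(\bfxi_{\bfx})\in\intbox f^{(n-j,j)}(B)$, so its absolute value is at most $\abs{\intbox f^{(n-j,j)}(B)}$. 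This yields $\abs{\tfrac1{n!}d^nf(\bfxi_{\bfx})}\le s_nr^n$. Summing these bounds gives $\abs{R_g(\bfx)}\le r^mS_{m,n}$, so $R_g(B)\subseteq r^m[-1,1]S_{m,n}$. Theorem~\ref{theorem:CL-form} then gives
\[
  \Hausdorff\bigl(f(B),\intboxTaylor_{m,n}f(B)\bigr)\le w(\intbox R_g(B))=2r^mS_{m,n}.
\]
Its proof carries over to boxes, as remarked after Theorem~\ref{theorem:CL-form-generalized}.

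The remaining step, and the only real obstacle, is to bound $S_{m,n}$ uniformly over all $B\subseteq B_0$. Here $\bfm$ varies with $B$, so we use that the derivatives $f^{(k-j,j)}$ of order $k\le n-1$ are continuous on the compact box $B_0$. This bounds $s_k$, for $m\le k\le n-1$, by a constant $M_k$ independent of $B$. For $s_n$ we use the hypothesis that the $\intbox f^{(n-j,j)}$ are bounded range functions over $B_0$. This means $\abs{\intbox f^{(n-j,j)}(B)}\le K_j$ for all $B\subseteq B_0$, which bounds $s_n$ by a constant $M_n$. (If ``bounded'' were only meant pointwise, we would instead invoke inclusion isotonicity, $\intbox f^{(n-j,j)}(B)\subseteq\intbox f^{(n-j,j)}(B_0)$; I would state this assumption explicitly.)

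Finally, $r\le w_0/2$ gives $r^{k-m}\le (w_0/2)^{k-m}$, so $S_{m,n}\le \sum_{k=m}^n M_k(w_0/2)^{k-m}\sa C'$. With $w(B)=2r$ we conclude
\[
  \Hausdorff\bigl(f(B),\intboxTaylor_{m,n}f(B)\bigr)\le 2C'r^m=2^{1-m}C'\,w(B)^m,
\]
which is \eqref{eq:order-m-convergence} with $C=2^{1-m}C'$. This requires $f$ to be $n$-times continuously differentiable on $B_0$, the regularity implicit in the construction. The restriction to square $B$ matches the definition; for general boxes one would replace $r$ by $\max(r_x,r_y)$ and the same argument goes through.
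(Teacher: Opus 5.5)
Your proposal is correct and follows essentially the same route as the paper. Like the paper, you identify $\intboxTaylor_{m,n}f$ as a generalized Cornelius--Lohner form with exact part $g=T_{m-1}$ and $\intbox R_g(B)=r^m[-1,1]S_{m,n}$, bound $s_k$ for $k<n$ through the derivative ranges over $B_0$ and $s_n$ through the boundedness of the $\intbox f^{(n-j,j)}$, and reach the same constant $C=2^{1-m}C'$. The only difference is that you explicitly check the enclosure $R_g(B)\subseteq r^m[-1,1]S_{m,n}$, which the paper leaves implicit in the derivation preceding the theorem.
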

	\begin{proof}
	Since $B\subseteq B_0$, we have $r=w(B)/2\le w_0/2$ and
	\[
	  s_k \le \frac{1}{k!} \sum_{j=0}^k \binom{k}{j}
	  	\abs{f^{(k-j,j)}(B_0)} \sa \bar{s}_k, \quad k=m,\dots,n-1.
	\]
	Moreover, since the range functions $\intbox f^{(n-j,j)}$ are
	bounded over $B_0$ there exist constants $C_j$, $j=0,\dots,n$, such
	that
	\[
	  s_n \le \frac{1}{n!} \sum_{j=0}^n \binom{n}{j} C_j \sa \bar{s}_n.
	\]
	Using these bounds, we conclude
	\[
	  S_{m,n} \le C', \qquad
	  C' \as \sum_{k=0}^{n-m} \Bigl( \frac{w_0}{2} \Bigr)^k \bar{s}_{k+m},
	\]
	where the constant $C'$ depends only on $f$ and $B_0$.
	
	Noticing that $\intboxTaylor_{m,n} f$ is a special case of the generalized
	Cornelius--Lohner form~\eqref{eq:CL-form-generalized} with $\intbox g=g=T_{m-1}$ and
	$\intbox R_g=r^m [-1,1] S_{m,n}$, the order $m$ convergence of
	$\intboxTaylor_{m,n} f$ then follows from
	Theorem~\ref{theorem:CL-form-generalized} and~\eqref{eq:strong-range-fn}, because
	\[
	  w(\intbox R_g(B))
	  \le 2 \abs{\intbox R_g(B)}
	   =  2 r^m S_{m,n}
	  \le C {w(B)}^m
	\]
    with $C\as2^{1-m}C'$.
	\end{proof}
	
	Note that $\intboxTaylor_{1,n}f$ is called ``Taylor form of
	\emph{order} $n$'' by \citet[Definition~3.4]{Ratschek:1984:CMF}, but
	``$n$-th order'' does not imply ``order $n$ convergence'' in their
	terminology~\citep[Section~2.4]{Ratschek:1984:CMF}. Instead, we
	prefer referring to this parameter as the \emph{level} of the Taylor
	form. The quadratic convergence of these Taylor forms (including the
	mean-value form $\intboxTaylor_{1,1}f$), which was proven by
	\citet[Theorem~3.4]{Ratschek:1984:CMF}, is also asserted by
	Theorem~\ref{theorem:Taylor-form}, because the exact range of the
	linear Taylor polynomial $T_1$ (see \ref{sec:range-linear-bivariate})
	is
	\[
	  T_1(B)
	  = f(\bfm) + r [-1,1] \bigl( \abs{f^{(1,0)}(\bfm)}
	  	+ \abs{f^{(0,1)}(\bfm)} \bigr)
	  = T_0(B) + r [-1,1] s_1,
	\]
	hence $\intboxTaylor_{1,n}f=\intboxTaylor_{2,n}f$.
	
	The advantage of our generalized Taylor forms is that they can
	achieve any convergence order in theory. In practice, however, one
	will probably make do with cubic or quartic convergence, because the
	exact ranges $T_2(B)$ and $T_3(B)$ of the quadratic and cubic Taylor
	polynomials can be determined by simple algorithms (see
	\ref{sec:range-quadratic-bivariate} and
	\ref{sec:range-cubic-bivariate}).
	
	If $f$ is a polynomial of degree $d$, so that all partial
	derivatives of order $k>d$ vanish, then we call $\intboxTaylor_m
	f(B)\as\intboxTaylor_{m,d+1}f(B)$ with $s_{d+1}=0$ the \bemph{maximal
	Taylor form of order $m$}. This form depends only on the radius $r$ of the square $B$ and
	the $(d+1)(d+2)/2\in O(d^2)$ partial derivatives of $f$ of orders
	$0,1,\dots,d$, evaluated at $\bfm$.

\section{Recursive Lagrange form}\label{sec:Lagrange-forms}

    \begin{figure}\centering\small
      \setlength{\unitlength}{0.1\linewidth}
      \begin{picture}(10,3)
        \put(0,0){\makebox(0,0)[bl]{\includegraphics[width=3\unitlength]{figs/f.png}}}
        \put(0.1,0.1){\makebox(0,0)[bl]{$f$}}
        \put(3.5,0){\makebox(0,0)[bl]{\includegraphics[width=3\unitlength]{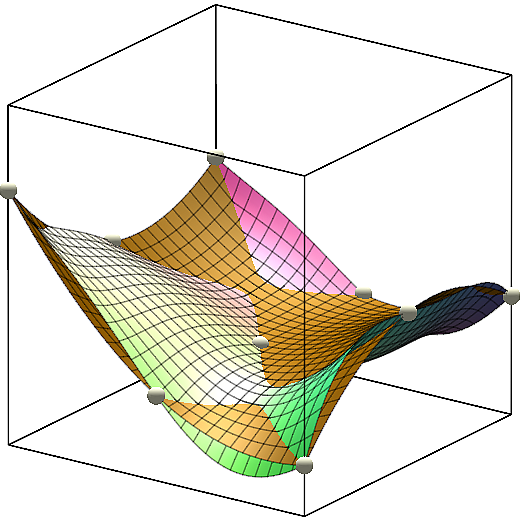}}}
        \put(3.6,0.1){\makebox(0,0)[bl]{$L_{0,0}$}}
        \put(7,0){\makebox(0,0)[bl]{\includegraphics[width=3\unitlength]{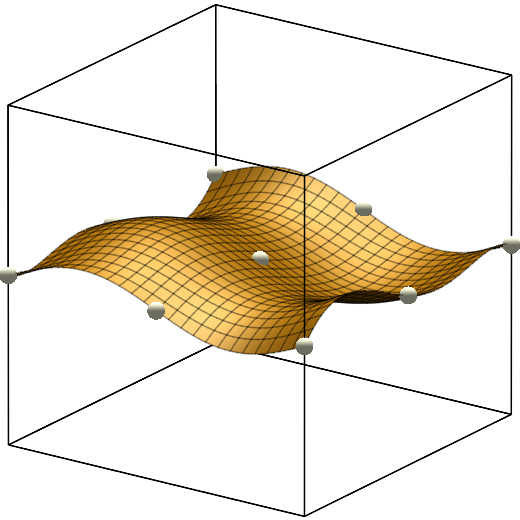}}}
        \put(6.8,0.1){\makebox(0,0)[bl]{$f-L_{0,0}$}}
      \end{picture}
	   \caption{Same function as in Figure~\ref{fig:Taylor-interpolation} (left), its biquadratic Lagrange interpolant (centre), and the remainder (right).}
	   \label{fig:Lagrange-interpolation}
	\end{figure}

    Another approach to construct range functions with cubic convergence
	is based on repeated Lagrange interpolation, an idea that was
	explored in the univariate setting by \citet{Hormann:2021:NRF}. To
	derive the bivariate version of their recursive Lagrange form, we
	recall a result by \citet[Theorem~2.1]{Moessner:2009:EBF}. It asserts
	that the error between $f$ and the biquadratic polynomial $L_{0,0}$
    (see Figure~\ref{fig:Lagrange-interpolation})
	that interpolates $f$ at the corners, the edge-midpoints, and the
	midpoint of a square $B$ with radius $\bfr=(r,r)$, that is, at the
	regular $3\times3$ grid
	$G(B)=\{m_x-r,m_x,m_x+r\}\times\{m_y-r,m_y,m_y+r\}$ (see
	\ref{sec:range-biquadratic}), is bounded from above by
	\[
	  \errorLagrange_{0,0}
	  \as \OmegaLagrange   \cdot \bigl(
	  		\abs{f^{(3,0)}(B)} + \abs{f^{(0,3)}(B)} \bigr)
	    + \OmegaLagrange^2 \cdot \abs{f^{(3,3)}(B)}, \qquad
	  \OmegaLagrange\as\frac{\sqrt{3}}{27}r^3,
	\]
	where the constant $\OmegaLagrange$ depends only on $r$. That is,
	\[
	  \abs{f(\bfx)-L_{0,0}(\bfx)} \le \errorLagrange_{0,0},
	\]
	for all $\bfx\in B$, hence
	\begin{equation}\label{eq:biquadratic-base}
	  f(B) \subseteq L_{0,0}(B) + [-1,1] \errorLagrange_{0,0}.
	\end{equation}
	
	To further bound $\errorLagrange_{0,0}$, we simply apply the same
	idea to the functions $f^{(3,0)}$ and $f^{(0,3)}$. That is, we
	determine the biquadratic polynomials $L_{1,0}$ and $L_{0,1}$ that
	interpolate $f^{(3,0)}$ and $f^{(0,3)}$, respectively, at $G(B)$, and
	conclude that
	\[
	  \abs{f^{(3,0)}(B)} \le \abs{L_{1,0}(B)}
	  	+ \errorLagrange_{1,0}, \qquad
	  \abs{f^{(0,3)}(B)} \le \abs{L_{0,1}(B)} + \errorLagrange_{0,1},
	\]
	where
	\begin{align*}
	  \errorLagrange_{1,0}
	  &\as \OmegaLagrange   \cdot \bigl( \abs{f^{(6,0)}(B)}
	  	+ \abs{f^{(3,3)}(B)} \bigr)
	     + \OmegaLagrange^2 \cdot \abs{f^{(6,3)}(B)},\\
	  \errorLagrange_{0,1}
	  &\as \OmegaLagrange   \cdot \bigl( \abs{f^{(3,3)}(B)}
	  	+ \abs{f^{(0,6)}(B)} \bigr)
	     + \OmegaLagrange^2 \cdot \abs{f^{(3,6)}(B)}.
	\end{align*}
	Consequently,
	\[
	  \begin{split}
	    \errorLagrange_{0,0}
	    &\le    \OmegaLagrange   \cdot \bigl( \abs{L_{1,0}(B)}
			+ \abs{L_{0,1}(B)} \bigr)\\
	    &\quad+ \OmegaLagrange^2 \cdot \bigl( \abs{f^{(6,0)}(B)}
			+ 3 \abs{f^{(3,3)}(B)} + \abs{f^{(0,6)}(B)} \bigr)
	        + \OmegaLagrange^3 \cdot \bigl( \abs{f^{(6,3)}(B)}
			+ \abs{f^{(3,6)}(B)} \bigr).
	  \end{split}
	\]
	
	If $f$ has bounded partial derivatives up to order $3n+3$, then we
	may repeat this procedure to obtain the upper bound
	\begin{equation}\label{eq:E00-bound}
	  \begin{split}
	    \errorLagrange_{0,0}
	    &\le    \sum_{k=1}^{n-1} \OmegaLagrange^k
			\sum_{j=0}^k \Delannoy{k}{j} \abs{L_{k-j,j}(B)}\\
	    &\quad+ \OmegaLagrange^n \sum_{j=0}^n
			\Delannoy{n}{j} \abs{f^{(3(n-j),3j)}(B)}
	          + \OmegaLagrange^{n+1} \sum_{j=1}^n
			  \Delannoy{n-1}{j-1} \abs{f^{(3(n+1-j),3j)}(B)},
	  \end{split}
	\end{equation}
	where $L_{i,j}$ denotes the biquadratic polynomial that interpolates
	$f^{(3i,3j)}$ at the grid $G(B)$. The coefficients
	\begin{equation}\label{eq:Delannoy}
	  \Delannoy{n}{k}
	  \as D(n-k,k)
	   =  \sum_{i=0}^k \binom{k}{i} \binom{n-k}{i} 2^i, \qquad
	  0 \le k \le n,
	\end{equation}
	are derived from the \emph{Delannoy numbers} $D(m,n)$. For $k=0$ and
	$k=n$, \eqref{eq:Delannoy} implies
	$\textDelannoy{n}{0}=\textDelannoy{n}{n}=1$. For other values of $k$,
	these coefficients satisfy a recurrence relation that is similar to
	the one for binomial coefficients,
	\[
	  \Delannoy{n}{k} = \Delannoy{n-1}{k-1} + \Delannoy{n-1}{k}
	  	+ \Delannoy{n-2}{k-1}, \qquad
	  0 < k < n.
	\]
	Arranging them in successive rows for $n=0,1,\dots$ gives the
	\emph{Tribonacci triangle} \citep{Alladi:1977:OTN}
	\[
	  \begin{array}{>{\centering\arraybackslash}m{15pt}*{12}
	  			{@{}>{\centering\arraybackslash}m{15pt}}}
	    &&&&&&           1                \\
	    &&&&&         1 & & 1             \\
	    &&&&       1 & & 3 & & 1          \\
	    &&&     1 & & 5 & & 5 & & 1       \\
	    &&   1 & & 7 & & 13 && 7 & & 1    \\
	    & 1 & & 9 && 25 & & 25 && 9 & & 1 \\[-1ex]
	    \text{$\iddots$} &&&&&& \text{$\vdots$} &&&&&& \text{$\ddots$}
	  \end{array}
	\]
	which is similar to \emph{Pascal's triangle}.
	
	Finally, we express each biquadratic Lagrange interpolant as
	$L_{i,j}=\TaylorLagrange_{i,j}+\remainderLagrange_{i,j}$, that is, as
	the sum of the second degree Taylor polynomial of $L_{i,j}$ about
	$\bfm$ (cf.~Equation~\eqref{eq:Taylor-polynomial}),
	\begin{equation}\label{eq:T_ij-Lagrange}
	  \begin{split}
	    \TaylorLagrange_{i,j}(\bfx)
	    = L_{i,j}(\bfm) &+ h_x L_{i,j}^{(1,0)}(\bfm)
			+ h_y L_{i,j}^{(0,1)}(\bfm)\\
	           &+ \frac12 h_x^2 L_{i,j}^{(2,0)}(\bfm)
				 	+ h_x h_y L_{i,j}^{(1,1)}(\bfm)
	                         + \frac12 h_y^2 L_{i,j}^{(0,2)}(\bfm),
	  \end{split}
	\end{equation}
	where $\bfx=(x,y)\in B$, $h_x=x-m_x$, and $h_y=y-m_y$, and the
	remainder
	\begin{equation}\label{eq:R_ij-Lagrange}
	  \remainderLagrange_{i,j}(\bfx)
	  = \frac12 h_x^2 h_y L_{i,j}^{(2,1)}(\bfm)
	  	+ \frac12 h_x h_y^2 L_{i,j}^{(1,2)}(\bfm)
	  	+ \frac14 h_x^2 h_y^2 L_{i,j}^{(2,2)}(\bfm).
	\end{equation}
	Note that simple algorithms for computing the exact ranges
	$\TaylorLagrange_{i,j}(B)$ and $\remainderLagrange_{i,j}(B)$ exist
	(see \ref{sec:range-quadratic-bivariate} and
	\ref{sec:range-biquadratic}). It then follows
	from~\eqref{eq:biquadratic-base} and~\eqref{eq:E00-bound} that $f(B)$
	can be estimated by the \bemph{recursive Lagrange form
    of order $3$ and level $n$},
	\begin{equation}\label{eq:Lagrange-form}
	  \intboxLagrange_{3,n} f(B)
	  \as \TaylorLagrange_{0,0}(B) + \remainderLagrange_{0,0}(B)
	  	+ [-1,1] U_{3,n}, \qquad
	  U_{3,n} \as \sum_{k=1}^{n+1} u_k \OmegaLagrange^k,
	\end{equation}
	where the $u_k$ are defined as
	\[
	  u_k
	  \as \sum_{j=0}^k \Delannoy{k}{j}
	  	\abs{\TaylorLagrange_{k-j,j}(B)
			+ \remainderLagrange_{k-j,j}(B)}, \quad
	  k=1,\dots,n-1,
	\]
	and
	\[
	  u_n \as \sum_{j=0}^n \Delannoy{n}{j}
	  	\abs{\intbox f^{(3(n-j),3j)}(B)},\qquad
	  u_{n+1} \as \sum_{j=1}^n \Delannoy{n-1}{j-1}
	  	\abs{\intbox f^{(3(n+1-j),3j)}(B)},
	\]
	and $\intbox f^{(3i,3j)}$ for $i+j\in\{n,n+1\}$ are bounded range
	functions over $B_0$.
	
	\begin{theorem}\label{theorem:Lagrange-form}
		The recursive Lagrange form $\intboxLagrange_{3,n} f$
		in~\eqref{eq:Lagrange-form} has cubic convergence.
	\end{theorem}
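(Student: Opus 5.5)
The plan mirrors the proof of Theorem~\ref{theorem:Taylor-form}: first show that $\intboxLagrange_{3,n}f$ is an instance of the generalized Cornelius--Lohner form~\eqref{eq:CL-form-generalized}, then bound the width of the remainder part by $C\,w(B)^3$.

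\textbf{Identifying the two parts.} Take the exact part to be $g=L_{0,0}=\TaylorLagrange_{0,0}+\remainderLagrange_{0,0}$, so that $R_g=f-L_{0,0}$. For the range function of $g$ we use $\intbox g(B)\as\TaylorLagrange_{0,0}(B)+\remainderLagrange_{0,0}(B)$. For the remainder we use $\intbox R_g(B)\as[-1,1]U_{3,n}$. This is a valid range function for $R_g$: by~\eqref{eq:biquadratic-base} and~\eqref{eq:E00-bound}, together with the inclusions $L_{k-j,j}(B)\subseteq\TaylorLagrange_{k-j,j}(B)+\remainderLagrange_{k-j,j}(B)$ and $f^{(3i,3j)}(B)\subseteq\intbox f^{(3i,3j)}(B)$, we get $\abs{f-L_{0,0}}\le\errorLagrange_{0,0}\le U_{3,n}$ on $B$.

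\textbf{Bounding the remainder.} Since $w(\intbox R_g(B))=2U_{3,n}=2\OmegaLagrange\sum_{k=1}^{n+1}u_k\OmegaLagrange^{k-1}$ and $\OmegaLagrange=\frac{\sqrt3}{27}r^3\le\frac{\sqrt3}{27}(w_0/2)^3$, it suffices to bound each $u_k$ by a constant depending only on $f$ and $B_0$. Then $w(\intbox R_g(B))\le C\,r^3=C2^{-3}w(B)^3$. For $u_n$ and $u_{n+1}$ this follows directly from the boundedness of the range functions $\intbox f^{(3i,3j)}$ over $B_0$.

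\textbf{Main obstacle.} For $1\le k\le n-1$ we must bound $\abs{\TaylorLagrange_{i,j}(B)+\remainderLagrange_{i,j}(B)}$ uniformly in $B\subseteq B_0$. This reduces to bounding the Taylor coefficients $L_{i,j}^{(a,b)}(\bfm)$ with $a,b\le2$, because $\abs{h_x},\abs{h_y}\le r\le w_0/2$.
\begin{itemize}
\item Writing $L_{i,j}$ in the tensor Lagrange basis on $G(B)$, each coefficient is a divided difference of the $3\times3$ values of $f^{(3i,3j)}$.
\item By the mean value theorem for divided differences, applied in each variable, such a coefficient equals a value of $f^{(3i+a,3j+b)}$ at some point of $B$. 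Hence it is bounded by $\max_{B_0}\abs{f^{(3i+a,3j+b)}}$, which is finite since $f$ has bounded partial derivatives up to order $3n+3$.
\item The remaining point to check is that no negative powers of $r$ survive. They do not: a divided difference of order $a$ already carries the factor $r^{-a}$ correctly, and it multiplies $h_x^a=O(r^a)$.
\end{itemize}

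Finally, $\intbox g(B)=g(B)$ holds exactly, because $L_{0,0}=\TaylorLagrange_{0,0}+\remainderLagrange_{0,0}$ and the two summands can be combined. More precisely, $\Hausdorff(g(B),\intbox g(B))$ is either zero or controlled by the width of $\remainderLagrange_{0,0}(B)$, which is $O(r^3)$ since its coefficients are bounded as shown above. Either way,~\eqref{eq:strong-range-fn} holds with $m=3$, and Theorem~\ref{theorem:CL-form-generalized} gives cubic convergence.
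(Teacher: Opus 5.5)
Your proof is correct in substance, but it takes a different route from the paper's in two respects.

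\textbf{Comparison with the paper.} First, the splitting differs. The paper takes the exact part to be $g=\TaylorLagrange_{0,0}$, whose range is computed exactly (see \ref{sec:range-quadratic-bivariate}). It moves $\remainderLagrange_{0,0}(B)$ into the remainder, $\intbox R_g(B)=\remainderLagrange_{0,0}(B)+[-1,1]U_{3,n}$. Then $\Hausdorff(g(B),\intbox g(B))=0$, and only $w(\intbox R_g(B))$ has to be bounded. You instead take $g=L_{0,0}$ with the non-exact $\intbox g(B)=\TaylorLagrange_{0,0}(B)+\remainderLagrange_{0,0}(B)$.

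Second, and more substantively, the coefficient bound differs. To bound $L_{i,j}^{(a,b)}(\bfm)$ uniformly in $B\subseteq B_0$, the paper uses the derivative error estimates of Theorem~5.1 in \citep{Moessner:2009:EBF}, via $\abs{L_{0,0}^{(k,l)}(\bfm)}\le\abs{f^{(k,l)}(\bfm)}+\abs{(f-L_{0,0})^{(k,l)}(\bfm)}$. This brings in $f^{(3,l)}$, $f^{(k,3)}$, $f^{(3,3)}$ and unspecified constants $C_{k,l}$. Your divided-difference argument is more elementary and self-contained. On the symmetric grid, the Taylor coefficients at $\bfm$ are exactly tensor-product divided differences; these are the formulas in \ref{sec:range-biquadratic}. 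The mean value theorem then gives the explicit identity $L_{i,j}^{(a,b)}(\bfm)=f^{(3i+a,3j+b)}(\bfxi)$ for some $\bfxi\in B$, with no unspecified constants.

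To make ``applied in each variable'' rigorous, write $\Delta_y^{b}$ for the order-$b$ divided difference in $y$. First apply the mean value theorem in $x$ to $x\mapsto\Delta_y^{b}\phi(x,\cdot)$. Its $a$-th derivative is $\Delta_y^{b}\phi^{(a,0)}(x,\cdot)$, because $\Delta_y^{b}$ is a finite linear combination of point values. Only then apply the mean value theorem in $y$.

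\textbf{A false claim in your last paragraph.} Your statement that $\intbox g(B)=g(B)$ holds exactly is wrong. Take $f(x,y)=x^2(1-y^2)$ on $B=[-1,1]^2$. Then $\TaylorLagrange_{0,0}(B)+\remainderLagrange_{0,0}(B)=[0,1]+[-1,0]=[-1,1]$, whereas $L_{0,0}(B)=f(B)=[0,1]$. Here all $f^{(3i,3j)}$ with $i+j\ge1$ vanish, so $U_{3,n}$ can be $0$. Hence \eqref{eq:strong-range-fn} also fails literally for your splitting.

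Your fallback is what saves the argument, but you should justify it rather than assert it. Apply Theorem~\ref{theorem:CL-form-generalized} to $L_{0,0}=\TaylorLagrange_{0,0}+\remainderLagrange_{0,0}$ with both ranges exact. This gives $\Hausdorff(L_{0,0}(B),\intbox g(B))\le w(\remainderLagrange_{0,0}(B))=O(w(B)^3)$. Then apply Theorem~\ref{theorem:CL-form-generalized} to $f=L_{0,0}+(f-L_{0,0})$. Both terms on its right-hand side are $O(w(B)^3)$, so cubic convergence follows directly without \eqref{eq:strong-range-fn}. Alternatively, adopt the paper's splitting, which avoids this issue.
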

	\begin{proof}
	By Theorem~5.1 in \citep{Moessner:2009:EBF}, there exist constants
	$C_{k,l}$, such that the partial derivatives of the error between $f$
	and its biquadratic Lagrange interpolant $L_{0,0}$ are bounded from
	above as
	\[
	  \abs{ f^{(k,l)}(\bfx) - L_{0,0}^{(k,l)}(\bfx) }
	  \le C_{k,l} \Bigl[ {(2r)}^{3-k} \abs{f^{(3,l)}(B)}
	  		+ {(2r)}^{3-l} \abs{f^{(k,3)}(B)}
	                   + {(2r)}^{6-k-l} \abs{f^{(3,3)}(B)} \Bigr],
	\]
	for all $\bfx\in B$ and any $0\le k,l\le2$. Since $B\subseteq B_0$
	and $r=w(B)/2\le w_0/2$, this implies
	\[
	  \abs{L_{0,0}^{(k,l)}(\bfm)}
	  \le C'_{k,l},
	\]
	where the constant
	\[
	  C'_{k,l}
	  \as \abs{f^{(k,l)}(B_0)} + C_{k,l}
	  		\Bigl[ w_0^{3-k} \abs{f^{(3,l)}(B_0)}
	           + w_0^{3-l} \abs{f^{(k,3)}(B_0)}
	           + w_0^{6-k-l} \abs{f^{(3,3)}(B_0)} \Bigr].
	\]
	depends only on $f$ and $B_0$. By~\eqref{eq:R_ij-Lagrange}, we then
	have
	\[
	  \abs{\remainderLagrange_{0,0}(B)}
	  \le C' {w(B)}^3, \qquad
	  C' \as \frac1{16} ( C'_{2,1} + C'_{1,2} )
	  	+ \frac{1}{64} w(B) C'_{2,2}.
	\]
	
	Similarly, we can bound the partial derivatives of the error between
	$f^{(3i,3j)}$ and $L_{i,j}$ and conclude
	from~\eqref{eq:T_ij-Lagrange} and~\eqref{eq:R_ij-Lagrange} that
	$\abs{\TaylorLagrange_{i,j}(B)}$ and
	$\abs{\remainderLagrange_{i,j}(B)}$ are bounded from above by
	constants that depend only on $f$ and $B_0$. Since also the range
	functions $\intbox f^{(3i,3j)}$ are bounded over $B_0$, there exist
	constants $\bar{u}_k$ that depend only on $f$ and $B_0$, such that
	$u_k\le\bar{u}_k$ for $k=1,\dots,n+1$. Recalling that
	$\OmegaLagrange=\sqrt{3}/27\cdot r^3$, we conclude that
	\[
	  U_{3,n} \le C'' {w(B)}^3, \qquad
	  C'' \as \frac{\sqrt3}{216} \sum_{k=0}^n
	  	\biggl( \frac{\sqrt{3}}{216} w_0^3 \biggr)^k \bar{u}_{k+1}.
	\]
	
	Noticing that $\intboxLagrange_{3,n} f$ is a special case of the generalized
	Cornelius--Lohner form~\eqref{eq:CL-form-generalized} with
	$\intbox g=g=\TaylorLagrange_{0,0}$ and $\intbox
	R_g=\remainderLagrange_{0,0}(B) + [-1,1] U_{3,n}$, the cubic
	convergence of $\intboxLagrange_{3,n} f$ then follows from
	Theorem~\ref{theorem:CL-form-generalized} and~\eqref{eq:strong-range-fn}, because
	\[
	  w(\intbox R_g(B))
	  \le 2 \abs{\intbox R_g(B)}
	  \le 2 \abs{\remainderLagrange_{0,0}(B)} + 2 U_{3,n}
	  \le 2 (C'+C'') {w(B)}^3.
	\]
	\end{proof}
	
	If $f$ is a polynomial of degree $d$, then we call
	$\intboxLagrange_3 f(B)\as\intboxLagrange_{3,n+1}f(B)$ with
	$n=\floor{d/3}$ and $u_{n+1}=u_{n+2}=0$ the \bemph{maximal Lagrange
	form of order $3$}. This form depends only on the radius $r$ of the square $B$ and the
	$(n+1)(n+2)/2\in O(d^2)$ partial derivatives $f^{(3i,3j)}$ of $f$
	with $i,j\ge0$ and $i+j\le n$, evaluated at the 9 nodes of the grid $G(B)$, which is comparable to
	the $(d+1)(d+2)/2$ evaluations at the midpoint of $B$ that are needed
	for the maximal Taylor forms $\intboxTaylor_m f(B)$.
	
\section{Recursive Hermite form}\label{sec:Hermite-forms}

    \begin{figure}\centering\small
      \setlength{\unitlength}{0.1\linewidth}
      \begin{picture}(10,3)
        \put(0,0){\makebox(0,0)[bl]{\includegraphics[width=3\unitlength]{figs/f.png}}}
        \put(0.1,0.1){\makebox(0,0)[bl]{$f$}}
        \put(3.5,0){\makebox(0,0)[bl]{\includegraphics[width=3\unitlength]{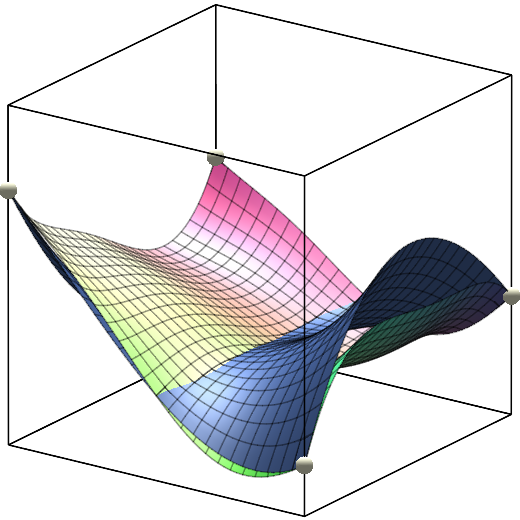}}}
        \put(3.6,0.1){\makebox(0,0)[bl]{$H_{0,0}$}}
        \put(7,0){\makebox(0,0)[bl]{\includegraphics[width=3\unitlength]{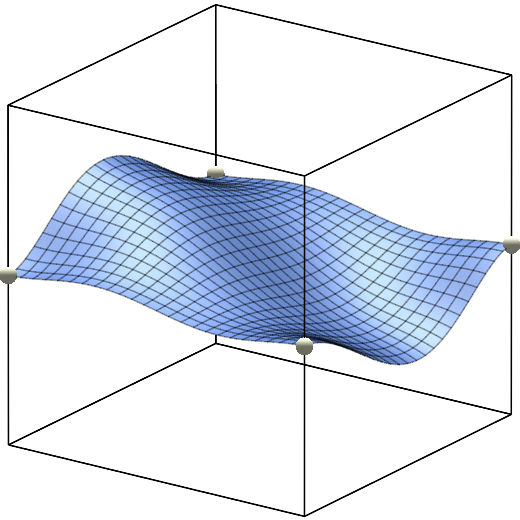}}}
        \put(6.8,0.1){\makebox(0,0)[bl]{$f-H_{0,0}$}}
      \end{picture}
	   \caption{Same function as in Figure~\ref{fig:Taylor-interpolation} (left), its bicubic Hermite interpolant (centre), and the remainder (right).}
	   \label{fig:Hermite-interpolation}
	\end{figure}

    A range function with quartic convergence can be obtained similarly
	by deriving the bivariate version of the univariate recursive Hermite
	form in \citep{Hormann:2023:RFO}. By Theorem~2.1 in
	\citep{Moessner:2009:EBF}, the error between $f$ and the bicubic
	polynomial $H_{0,0}$ (see Figure~\ref{fig:Hermite-interpolation})
    that interpolates $f$, $f^{(1,0)}$, $f^{(0,1)}$,
	and $f^{(1,1)}$ at the corners of the square $B$ with radius
	$\bfr=(r,r)$, is bounded from above by
	\[
	  \errorHermite_{0,0}
	  \as \OmegaHermite   \cdot \bigl( \abs{f^{(4,0)}(B)}
	  	+ \abs{f^{(0,4)}(B)} \bigr)
	    + \OmegaHermite^2 \cdot \abs{f^{(4,4)}(B)}, \qquad
	  \OmegaHermite\as\frac{1}{24}r^4.
	\]
	With the same arguments as in Section~\ref{sec:Lagrange-forms} and
	under the assumption that $f$ has bounded partial derivatives up to
	order $4n+4$, we can bound $\errorHermite_{0,0}$ from above by
	\begin{equation}\label{eq:E00-bound-Hermite}
	  \begin{split}
	    \errorHermite_{0,0}
	    &\le    \sum_{k=1}^{n-1} \Omega^k \sum_{j=0}^k
				\Delannoy{k}{j} \abs{H_{k-j,j}(B)}\\
	    &\quad+ \Omega^n \sum_{j=0}^n \Delannoy{n}{j}
				\abs{f^{(4(n-j),4j)}(B)}
	          + \Omega^{n+1} \sum_{j=1}^n \Delannoy{n-1}{j-1}
			  	\abs{f^{(4(n+1-j),4j)}(B)},
	  \end{split}
	\end{equation}
	where $H_{i,j}$ denotes the bicubic polynomial that interpolates
	$f^{(4i,4j)}$, $f^{(4i+1,4j)}$, $f^{(4i,4j+1)}$, and
	$f^{(4i+1,4j+1)}$ at the corners of $B$.
	
	As in Section~\ref{sec:Lagrange-forms}, it remains to split each
	bicubic Hermite interpolant into the third degree Taylor polynomial
	of $H_{i,j}$ about $\bfm$
	and the remainder,
	$H_{i,j}=\TaylorHermite_{i,j}+\remainderHermite_{i,j}$,
	noting that the exact ranges $\TaylorHermite_{i,j}(B)$ and
	$\remainderHermite_{i,j}(B)$ can be computed with simple procedures
	(see \ref{sec:range-cubic-bivariate} and \ref{sec:range-bicubic}).
	We can then estimate $f(B)$ by the \bemph{recursive Hermite form of order $4$ and level $n$},
	\begin{equation}\label{eq:Hermite-form}
	  \intboxHermite_{4,n} f(B)
	  \as \TaylorHermite_{0,0}(B) + \remainderHermite_{0,0}(B)
	  		+ [-1,1] V_{4,n}, \qquad
	  	V_{4,n} \as \sum_{k=1}^{n+1} v_k \OmegaHermite^k,
	\end{equation}
	where the $v_k$ are defined as
	\[
	  v_k \as \sum_{j=0}^k \Delannoy{k}{j}
	  		\abs{\TaylorHermite_{k-j,j}(B)
			+ \remainderHermite_{k-j,j}(B)}, \quad k=1,\dots,n-1,
	\]
	and
	\[
	  v_n \as \sum_{j=0}^n \Delannoy{n}{j}
	  	\abs{\intbox f^{(4(n-j),4j)}(B)},\qquad
	  v_{n+1} \as \sum_{j=1}^n \Delannoy{n-1}{j-1}
	  	\abs{\intbox f^{(4(n+1-j),4j)}(B)},
	\]
	and $\intbox f^{(4i,4j)}$ for $i+j\in\{n,n+1\}$ are bounded range
	functions over $B_0$.
	
	\begin{theorem}\label{theorem:Hermite-form}
	The recursive Hermite form $\intboxHermite_{4,n} f$
	in~\eqref{eq:Hermite-form} has quartic convergence.
	\end{theorem}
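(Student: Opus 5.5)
The plan is to mirror the proof of Theorem~\ref{theorem:Lagrange-form}, replacing the biquadratic Lagrange data by bicubic Hermite data and the factor $r^3$ by $r^4$. We view $\intboxHermite_{4,n} f$ as a generalized Cornelius--Lohner form~\eqref{eq:CL-form-generalized} with exact part $\intbox g=g=\TaylorHermite_{0,0}$ and remainder range $\intbox R_g=\remainderHermite_{0,0}(B)+[-1,1]V_{4,n}$. Since $\TaylorHermite_{0,0}$ is a cubic polynomial, its exact range is computable (see \ref{sec:range-cubic-bivariate}), so the first term in Theorem~\ref{theorem:CL-form-generalized} vanishes. By~\eqref{eq:strong-range-fn}, it therefore suffices to show $w(\intbox R_g(B))\le 2\abs{\remainderHermite_{0,0}(B)}+2V_{4,n}\le C\,{w(B)}^4$. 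Enclosure of $f(B)$ is already guaranteed by the Hermite error bound and~\eqref{eq:E00-bound-Hermite}.

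\emph{Step 1: bounding $\remainderHermite_{0,0}$.} Since $\remainderHermite_{0,0}=H_{0,0}-\TaylorHermite_{0,0}$ consists of the monomials of the bicubic $H_{0,0}$ of total degree at least $4$, namely $h_x^3h_y$, $h_x^2h_y^2$, $h_xh_y^3$, $h_x^3h_y^2$, $h_x^2h_y^3$ and $h_x^3h_y^3$, we can write
\[
\remainderHermite_{0,0}(\bfx)=\sum_{\substack{0\le k,l\le 3\\ k+l\ge 4}}\frac{h_x^kh_y^l}{k!\,l!}H_{0,0}^{(k,l)}(\bfm).
\]
Every term is $O(r^4)$ once the coefficients $\abs{H_{0,0}^{(k,l)}(\bfm)}$ are bounded by constants depending only on $f$ and $B_0$. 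For these we use the derivative error estimate of Theorem~5.1 in \citep{Moessner:2009:EBF}, now for bicubic Hermite interpolation with $0\le k,l\le3$. It gives
\[
\abs{f^{(k,l)}-H_{0,0}^{(k,l)}}\le C_{k,l}\Bigl[(2r)^{4-k}\abs{f^{(4,l)}(B)}+(2r)^{4-l}\abs{f^{(k,4)}(B)}+(2r)^{8-k-l}\abs{f^{(4,4)}(B)}\Bigr],
\]
so $\abs{H_{0,0}^{(k,l)}(\bfm)}\le C'_{k,l}$, with constants built from ranges over $B_0$ and powers of $w_0$. Summing over the listed monomials with $\abs{h_x},\abs{h_y}\le r=w(B)/2$ yields $\abs{\remainderHermite_{0,0}(B)}\le C'{w(B)}^4$.

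\emph{Step 2: bounding $V_{4,n}$.} The same derivative estimate applied to $f^{(4i,4j)}$ and its interpolant $H_{i,j}$ bounds every coefficient of $H_{i,j}$ at $\bfm$. Hence $\abs{\TaylorHermite_{i,j}(B)+\remainderHermite_{i,j}(B)}$ is bounded uniformly for $B\subseteq B_0$. Together with the boundedness of the range functions $\intbox f^{(4i,4j)}$, this gives constants $\bar v_k$ with $v_k\le\bar v_k$. Since $\OmegaHermite=r^4/24={w(B)}^4/384$ and $\OmegaHermite\le w_0^4/384$, we get
\[
V_{4,n}\le \frac{{w(B)}^4}{384}\sum_{k=0}^{n}\Bigl(\frac{w_0^4}{384}\Bigr)^k\bar v_{k+1}\sa C''{w(B)}^4.
\]
Combining the two steps gives $C=2(C'+C'')$.

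The main obstacle is Step~1, specifically the availability of the Hermite analogue of Theorem~5.1 in \citep{Moessner:2009:EBF} for mixed derivatives up to order $(3,3)$. I would first check that their framework covers tensor-product Hermite interpolation with derivative orders up to $3$ in each variable. If it does not, the fallback is to write $H_{0,0}$ explicitly in the tensor-product cubic Hermite basis on $B$. Its coefficients are then finite differences of the corner data divided by powers of $r$, and Taylor expansion of $f$ about $\bfm$ shows that these divided differences stay bounded by derivatives of $f$ on $B_0$ up to order $(4,4)$. This gives the needed uniform bounds directly, at the cost of a more tedious computation.
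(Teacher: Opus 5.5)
Your proposal is correct and follows the paper's proof essentially step for step. It uses the same generalized Cornelius--Lohner splitting with $\intbox g=g=\TaylorHermite_{0,0}$ and the same derivative estimates from Theorem~5.1 of \citep{Moessner:2009:EBF} to bound the Taylor coefficients of $H_{0,0}$ and $H_{i,j}$ at $\bfm$. It reaches the same bounds $\abs{\remainderHermite_{0,0}(B)}\le C'{w(B)}^4$ and $V_{4,n}\le C''{w(B)}^4$. You also fill in details the paper leaves to ``analogously'', such as $\OmegaHermite={w(B)}^4/384$; the paper invokes that theorem directly for the Hermite case, so your fallback is not needed.
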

	\begin{proof}
	By Theorem~5.1 in \citep{Moessner:2009:EBF}, there exist constants
	$C_{k,l}$, such that the partial derivatives of the error between $f$
	and its bicubic Hermite interpolant $H_{0,0}$ are bounded from above
	as
	\[
	  \abs{ f^{(k,l)}(\bfx) - H_{0,0}^{(k,l)}(\bfx) }
	  \le C_{k,l} \Bigl[ {(2r)}^{4-k} \abs{f^{(4,l)}(B)}
	  		+ {(2r)}^{4-l} \abs{f^{(k,4)}(B)}
	                   + {(2r)}^{8-k-l} \abs{f^{(4,4)}(B)} \Bigr],
	\]
	for any $0\le k,l\le3$,
	hence
	\[
	  \abs{H_{0,0}^{(k,l)}(\bfm)}
	  \le C'_{k,l},
	\]
	where the constants $C'_{k,l}$ depend only on $f$ and $B_0$.
	
	
	Analogously to the proof of Theorem~\ref{theorem:Lagrange-form}, we
	conclude that there exist constants $C'$ and $C''$, such that
	\[
	  \abs{\remainderHermite_{0,0}(B)} \le C' {w(B)}^4, \qquad
	  V_{4,n} \le C'' {w(B)}^4,
	\]
	and by noticing that $\intboxHermite_{4,n} f$ is a special case of
	the generalized Cornelius--Lohner form~\eqref{eq:CL-form-generalized} with
	$\intbox g=g=\TaylorHermite_{0,0}$ and $\intbox R_g=\remainderHermite_{0,0}(B)
	+ [-1,1] V_{4,n}$, the quartic convergence of $\intboxHermite_{4,n}
	f$ then follows from Theorem~\ref{theorem:CL-form-generalized} and~\eqref{eq:strong-range-fn}, because
	\[
	  w(\intbox R_g(B))
	  \le 2 (C'+C'') {w(B)}^4.
	\]
	\end{proof}

	If $f$ is a polynomial of degree $d$, then we call
	$\intboxHermite_4 f(B)\as\intboxHermite_{4,n+1}f(B)$ with
	$n=\floor{d/4}$ and $v_{n+1}=v_{n+2}=0$ the \bemph{maximal Hermite
	form of order $4$}. This form depends only on the radius $r$ of the square $B$ and the
	$(n+1)(n+2)/2\in O(d^2)$ partial derivatives $f^{(4i+k,4j+l)}$ of $f$
	with $i,j\ge0$ and $i+j\le n$ and $0\le k,l\le1$, evaluated at the 4 corners of $B$, which is comparable to number of evaluations needed
	for the maximal Taylor forms $\intboxTaylor_m f(B)$ 
    and the maximal Lagrange forms $\intboxLagrange_m f(B)$.

\section{Experiments}\label{sec:experiments}

\begin{table}[t!]\small\centering
	  \begin{tabular}{lcccl}
	    \toprule
	    name & zero level set & graph & degree & $f(x,y)$\\
	    \midrule
	    clover-4 & \parbox{0.8in}{\includegraphics{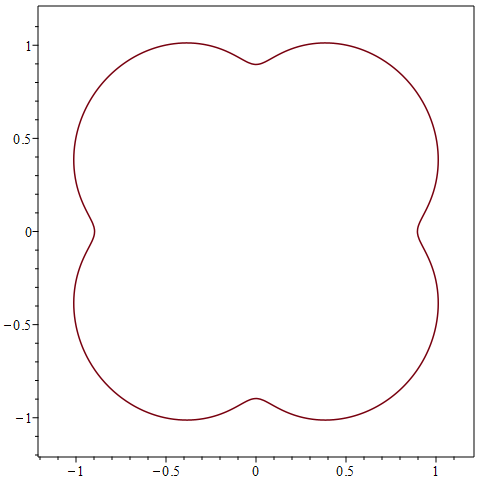}}
	             & \parbox{0.95in}{\includegraphics{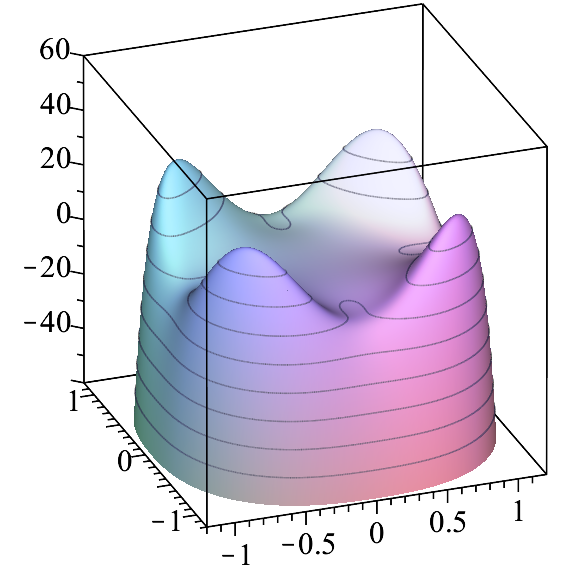}}
	             & $10$ & \scalebox{1}{\parbox{217pt}{
	        $- 50x^{10} - (249y^2 - 57) x^8 - (498y^4 - 227y^2 - 1)x^6$\\
	        $- (498y^6 - 341y^4 - 3y^2 + 16)x^4$\\
	        $- (249y^8 - 227y^6 - 3y^4 - 102y^2 - 1)x^2$\\
	        $- 50y^{10} + 57y^8 + y^6 - 16y^4 + y^2 + 1$}}\\
	    clover-5 & \parbox{0.8in}{\includegraphics{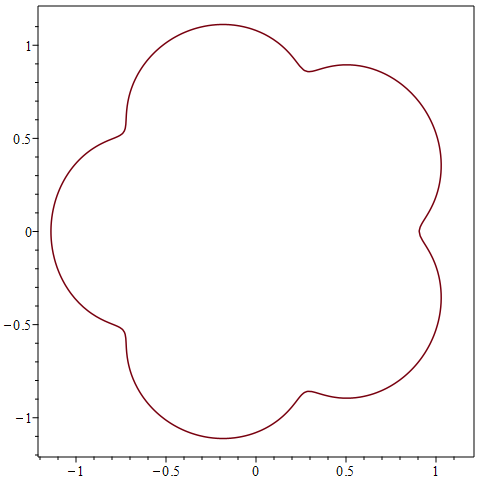}}
	             & \parbox{0.95in}{\includegraphics{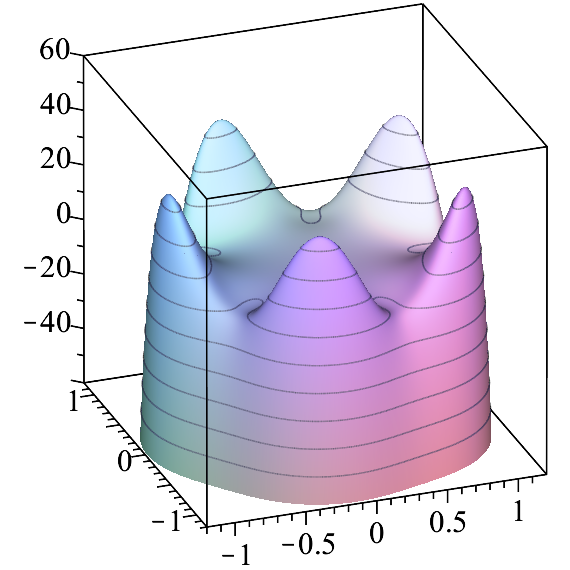}}
	             & $12$ & \scalebox{0.9}{\parbox{240pt}{
	        $- 71x^{12} - (424y^2 - 79)x^{10} - (1059y^4
					- 396y^2 - 1)x^8$\\
	        $- (1412y^6 - 793y^4 - 4y^2 - 1)x^6 - 20x^5$\\
	        $- (1059y^8 - 793y^6 - 6y^4 - 3y^2 - 1)x^4 + 202y^2x^3$\\
	        $- (424y^{10} - 396y^8 - 4y^6 - 3y^4 - 2y^2 - 1)x^2$\\
	        $- 101y^4x - 71y^{12} + 79y^{10} + y^8 + y^6 + y^4
					+ y^2 + 1$}}\\
	    clover-8 & \parbox{0.8in}{\includegraphics{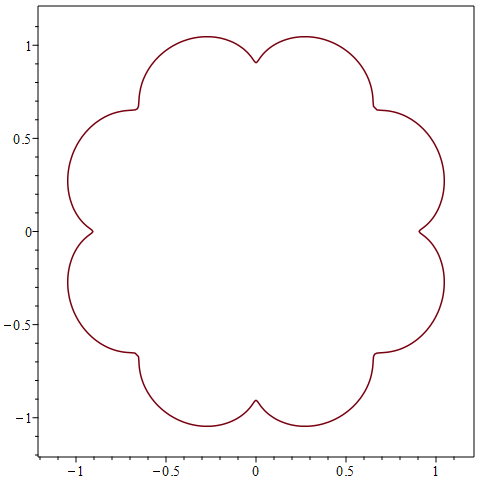}}
	             & \parbox{0.95in}{\includegraphics{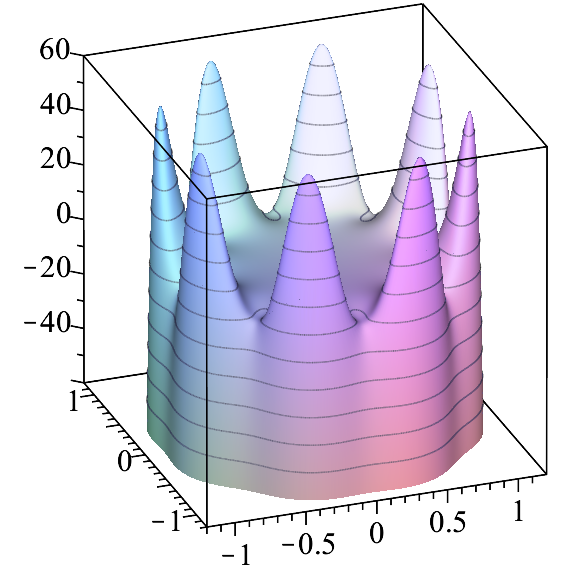}}
	             & $18$ & \scalebox{0.7}{\parbox{310pt}{
	        $- 156x^{18} - (1406y^2 - 170)x^{16} - (5625y^4
					- 1363y^2 - 1)x^{14}$\\
	        $- (13125y^6 - 4769y^4 - 7y^2 - 1)x^{12}
					- (19688y^8 - 9538y^6 - 21y^4 - 6y^2 - 1)x^{10}$\\
	        $- (19688y^{10} - 11922y^8 - 35y^6 - 15y^4 - 5y^2 + 30)x^8$\\
	        $- (13125y^{12} - 9538y^{10} - 35y^8
					- 21y^6 - 11y^4 - 879y^2 - 1)x^6$\\
	        $- (5625y^{14} - 4769y^{12} - 21y^{10}
					- 15y^8 - 11y^6 + 2181y^4 - 4y^2 - 1)x^4$\\
	        $- (1406y^{16} - 1363y^{14} - 7y^{12} - 6y^{10}
					- 5y^8 - 879y^6 - 4y^4 - 3y^2 - 1)x^2$\\
	        $- 156y^{18} + 170y^{16} + y^{14} + y^{12} + y^{10}
					- 30y^8 + y^6 + y^4 + y^2 + 1$}}\\
	    grass & \parbox{0.8in}{\includegraphics{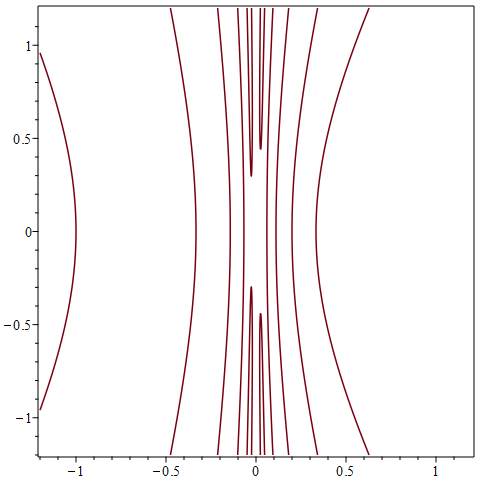}}
	          & \parbox{0.95in}{\includegraphics{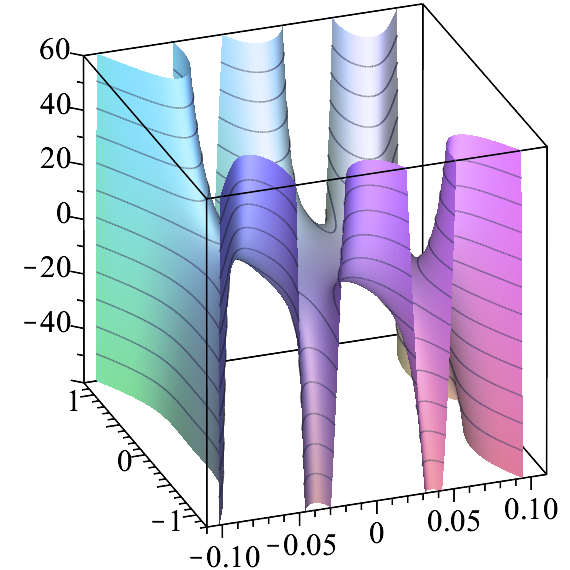}}
	          & $12$ & \scalebox{1}{\parbox{217pt}{
	        $\displaystyle 1 + \prod_{k=1}^6
				\bigl( (1-4^k)x^2 + y^2 - 2x + 1 \bigr)$}}\\
	    \bottomrule
	  \end{tabular}
		\caption{Overview of the four polynomial test functions used in our
		numerical experiments.}
\label{tab:test-functions}
\end{table}

    In this section, we validate the practicality and
	usefulness of our new range functions.
	We do this by reporting on the raw performance
	of evaluating each range function on individual boxes.
	We evaluate these range functions in terms of
	efficiency and efficacy as noted in the introduction.
	Our code, data, and Makefile experiments can 
    be downloaded from our Core Library
    webpage~\citep{core:home-range}.

	Our experimental platform is a Windows 11 laptop equipped with a 2.2
	GHz Intel Core i7-14650HX processor and 32 GB of RAM.  All numerical
	experiments are implemented in Julia using standard IEEE 754
	double-precision floating-point arithmetic (Float64), and
	timings are obtained using Julia’s built-in benchmarking tools.
	Of independent interest is that 
	our bivariate functions are implemented as
	Straight-Line Programs (SLPs) or codelists
	in the sense of Automatic Differentiation
	\citep{griewank-walther:bk},
	and we do symbolic differentiation of codelists
	(in contrast to the standard numeric differentiation)
	to improve performance.
	We will report about the details of this implementation in a separate paper.

\xCY{Ignore this?
	{\em Although all higher derivatives of these functions can 
	be automatically evaluated, we actually take the unusual
	step of symbolically generating these derivatives (generating
	new SLPs).  The reason is that, in the context of
	our subdivision applications, this allows us to reuse these
	derivatives just by evaluation over a multitude of boxes.}
	\cored{Bingwei/Thomas:} do we have experiments showing
	the superiority of this approach?  APPARENTLY, the
	answer is NO.  But it should be fixed!
	}

\subsection{Test functions}
	Table~\ref{tab:test-functions}
    lists \rev{four bivariate polynomials for which the results
    reported below capture the typical behaviour observed
    in all our experiments.}
	For each test function $f(x,y)$ we show 
    the curve $f(x,y)=0$ in the domain $B_0\as[-1.2,1.2]^2$, as
    well as the graph of $f$, to
	give some intuition about the characteristics of
    these polynomials. 
    
\subsection{Convergence orders}

\begin{figure}[t!]\footnotesize
	  \begin{tabular}{rcccc}
	    \toprule
	    & \multicolumn{2}{c}{$r=0.1$} & \multicolumn{2}{c}{$r=0.01$}
				\\\cmidrule(lr){2-3}\cmidrule(lr){4-5}
	    & range & $\Hausdorff$ & range & $\Hausdorff$ \\
	    \midrule
	            $f$ & $[-1.3586, -0.9646]$ &   --- 
					& $[-1.07788547, -1.05241970]$ & --- \\
	    \cmidrule{1-5}
	    $\intboxTaylor_2   f$ & $[-1.4303, -0.6978]$ & $0.2667$
				& $[-1.07824745, -1.04988220]$ & $0.00253750$\\[\jot]
	    $\intboxTaylor_3   f$ & $[-1.3976, -0.8436]$ & $0.1209$
				& $[-1.07792045, -1.05238265]$ & $0.00003705$\\[\jot]
	    $\intboxLagrange_3 f$ & $[-1.3688, -0.8688]$ & $0.0958$
				& $[-1.07789250, -1.05241267]$ & $0.00000703$\\[\jot]
	    $\intboxTaylor_4   f$ & $[-1.3630, -0.9397]$ & $0.0249$
				& $[-1.07788591, -1.05241719]$ & $0.00000252$\\[\jot]
	    $\intboxHermite_4  f$ & $[-1.3621, -0.9508]$ & $0.0138$
				& $[-1.07788571, -1.05241821]$ & $0.00000149$\\
	    \bottomrule
	  \end{tabular}\hfill
	  \parbox{0.3\linewidth}{\includegraphics{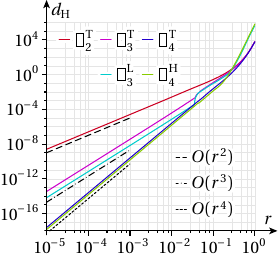}}
	  \caption{Concrete (rounded) values (left) and log-log plot (right)
	  of the Hausdorff distance $\Hausdorff$ between the exact range
	  $f(B_r)$ of the test function ``clover-4''
	  over a square $B_r$ with midpoint
	  $(0.1,0.2)$ and radius $r\in[0.00001,1]$ and the approximate ranges
	  $\intbox^\mathrm{X}_m f(B_r)$ given by maximal Taylor, Lagrange, and Hermite
	  forms.}
\label{fig:convergence}
\end{figure}

\begin{figure}[t!]\footnotesize\def\z{\phantom{0}}
	  \begin{tabular}{rcccc}
	    \toprule
	    & \multicolumn{2}{c}{$r=0.005$} & \multicolumn{2}{c}{$r=0.0005$}
				\\\cmidrule(lr){2-3}\cmidrule(lr){4-5}
	    & range & $\Hausdorff$ & range & $\Hausdorff$ \\
	    \midrule
	            $f$ & $[-61.874, -46.411]$ &   --- 
					& $[-60.5351611, -59.2710915]$ & --- \\
	    \cmidrule{1-5}
	    $\intboxTaylor_2   f$ & $[-73.566, -46.367]$ & $11.6914$
				& $[-60.6614110, -59.2708307]$ & $0.12624978$\\[\jot]
	    $\intboxTaylor_3   f$ & $[-62.737, -46.391]$ & \z$0.8625$
				& $[-60.5351831, -59.2710780]$ & $0.00002195$\\[\jot]
	    $\intboxLagrange_3 f$ & $[-62.639, -45.980]$ & \z$0.7648$
				& $[-60.5355311, -59.2707216]$ & $0.00036989$\\[\jot]
	    $\intboxTaylor_4   f$ & $[-61.926, -46.404]$ & \z$0.0516$
				& $[-60.5351702, -59.2710910]$ & $0.00000904$\\[\jot]
	    $\intboxHermite_4  f$ & $[-61.947, -46.360]$ & \z$0.0728$
				& $[-60.5351657, -59.2710865]$ & $0.00000503$\\
	    \bottomrule
	  \end{tabular}\hfill
	  \parbox{0.3\linewidth}{\includegraphics{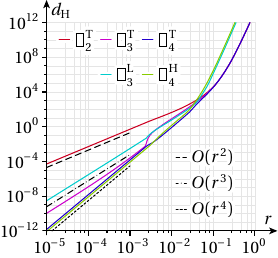}}
	  \caption{Data and plot for a similar experiment as in Figure~\ref{fig:convergence}, using the ``grass'' 
      test function and a square with midpoint $(0.1,0.1)$.}
\label{fig:convergence-grass}
\end{figure}

	Figures~\ref{fig:convergence} and~\ref{fig:convergence-grass} report
	and compare the tightness of the five range functions
		\begin{equation}\label{eq:5ranges}
			\intboxTaylor_2 f,\quad \intboxTaylor_3 f,\quad \intboxTaylor_4 f,
				\quad \intboxLagrange_3 f, \quad \intboxHermite_4 f,
		\end{equation}
	where $f$ is the function ``clover-4'' (Figure~\ref{fig:convergence}) or ``grass'' (Figure~\ref{fig:convergence-grass}) from Table~\ref{tab:test-functions}
	and $\intbox^{\mathrm{X}}_m f$ (for $\mathrm{X}\in \{\mathrm{T},\mathrm{L},\mathrm{H}\}$)
	is based on the maximal $\mathrm{X}$-form of order $m$.
	We evaluate it over squares of varying radii around a
	specific midpoint. The plots confirm the asymptotic convergence
	orders of the Taylor forms of orders $m=2,3,4$, the Lagrange form of
	order $3$, and the Hermite form of order $4$, as stated in
	Theorems~\ref{theorem:Taylor-form}, \ref{theorem:Lagrange-form}, and
	\ref{theorem:Hermite-form}, respectively. We further observe in Figure~\ref{fig:convergence}, both
	from the plot and the data, that $\intboxLagrange_3$ is consistently
	tighter than $\intboxTaylor_3$ for $r\le0.275$ and significantly
	tighter for $r\le0.035$. Similarly, $\intboxHermite_4$ is tighter
	than $\intboxTaylor_4$ for $r\le0.02$. 
    While the convergence orders were confirmed in all our
	tests, a different relative behaviour between $\intboxLagrange_3$ and $\intboxTaylor_3$ can be seen in Figure~\ref{fig:convergence-grass}.
    \rev{This reflects the fact that the Hausdorff distance $\Hausdorff$ between the exact and the approximate range depends not only on the convergence \emph{range}, but also on the convergence \emph{constant}, which in turn depends on the test function $f$.
    Here, we can deduce from the plots that the 
    convergence constant of
    $\intboxTaylor_3$ for ``clover-4'' is bigger than 
    that of $\intboxLagrange_3$ for ``clover-4'', but
    that this relation is inverted for ``grass''.}

\subsection{Efficiency and efficacy}
	In Table~\ref{tab:LT}, we evaluate the five range functions
	of \eqref{eq:5ranges} on the set of
	$1024$ boxes obtained by subdividing
	the square $B_0=[-1.2,1.2]^2$ uniformly into a $32\times 32$ grid.
	Let $\totalTime^\mathrm{X}_m$ be the sum of the times for evaluating
	$\intbox^\mathrm{X}_m f(B)$ each of these $1024$ boxes.
	Similarly, $\totalWidth^\mathrm{X}_m$ is the sum of the widths
	of the ranges for these $1024$ boxes.
	Viewing the quadratically convergent $\intboxTaylor_2$ as
	the \emph{base line} method, we define the \emph{speedup ratio}
	(i.e., efficiency ratio)
	of method $\intbox^\mathrm{X}_m$ as
		$\frac{\totalTime^\mathrm{T}_2}{\totalTime^\mathrm{X}_m}$.
	Similarly, the \emph{efficacy ratio} 
	of method $\intbox^\mathrm{X}_m$ is
		$\frac{\totalWidth^\mathrm{T}_2}{\totalWidth^\mathrm{X}_m}$.
	Note that we define each ratio so that the method $\intbox^\mathrm{X}_m$
	is better than the base line if and only if the ratio is greater than $1$.
	The column ``time (ms)'' gives the average time in milliseconds over 10 runs for 
    each method and test function, as measured by Julia's benchmarking tool.

	One of the attractions of Lagrange and Hermite ranges
	is that in subdivision algorithms,
	their evaluations can \emph{share} computations
	\citep{Hormann:2021:NRF,Hormann:2023:RFO}, in this
    case the evaluation of the test function and its partial 
    derivatives at the corners and edge midpoints of the 1024 boxes,
	leading to amortized time and space complexity
	that is not reflected in the timing of individual evaluations.
	So we added a second row in Table~\ref{tab:LT} for these
	two methods to show
	the improved speedup and memory usage when sharing is turned on.

\def\z{\phantom{0}}
\begin{table}[t!]
	\centering\small
	\begin{tabular}{llcccc}
	\toprule
	test function & range function & time (ms) & speedup & efficacy & memory (MB) \\
	\midrule
    \multirow{7}{*}{clover-4}
    & $\intboxTaylor_2$ (baseline) & \z195.98 & 1 & 1 & \z141.70 \\\cmidrule(lr){2-6}
    & $\intboxTaylor_3$ & \z197.92 & 0.99 & 1.1978 & \z140.89 \\
    & $\intboxTaylor_4$ & \z233.58 & 0.84 & 1.1991 & \z158.68 \\\cmidrule(lr){2-6}
    & $\intboxLagrange_3$
    & \z368.19 & 0.53 & \multirow{2}{*}{1.1950} & \z251.10 \\
    & $\intboxLagrange_3$ (shared)
    & \color{mygreen} \z179.67 & \color{mygreen} 1.09 & & \color{mygreen} \z122.66 \\\cmidrule(lr){2-6}
    & $\intboxHermite_4$
    & \z546.54 & 0.36 & \multirow{2}{*}{\color{mygreen} 1.1997} & \z748.78 \\
    & $\intboxHermite_4$ (shared)
    & \z306.17 & 0.64 & & \z584.38 \\
    \midrule
    \multirow{7}{*}{clover-5}
    & $\intboxTaylor_2$ (baseline) & \z333.99 & 1 & 1 & \z217.26 \\\cmidrule(lr){2-6}
    & $\intboxTaylor_3$ & \z312.35 & 1.07 & 1.2223 & \z216.45 \\
    & $\intboxTaylor_4$ & \z320.70 & 1.04 & 1.2229 & \z235.58 \\\cmidrule(lr){2-6}
    & $\intboxLagrange_3$
    & \z592.08 & 0.56 & \multirow{2}{*}{1.2195} & \z398.05 \\
    & $\intboxLagrange_3$ (shared)
    & \color{mygreen} \z288.54 & \color{mygreen} 1.16 & & \color{mygreen} \z193.40 \\\cmidrule(lr){2-6}
    & $\intboxHermite_4$
    & \z865.50 & 0.39 & \multirow{2}{*}{\color{mygreen} 1.2240} & 1039.23 \\
    & $\intboxHermite_4$ (shared)
    & \z457.68 & 0.73 & & \z801.42 \\
    \midrule	
    \multirow{7}{*}{clover-8}
    & $\intboxTaylor_2$ (baseline) & \z846.72 & 1 & 1 & \z560.39 \\\cmidrule(lr){2-6}
    & $\intboxTaylor_3$ & \z857.81 & 0.99 & 1.2986 & \z559.57 \\
    & $\intboxTaylor_4$ & \z848.14 & 1.00 & 1.2990 & \z584.61 \\\cmidrule(lr){2-6}
    & $\intboxLagrange_3$
    & 1514.08 & 0.56 & \multirow{2}{*}{1.2941} & \z975.60 \\
    & $\intboxLagrange_3$ (shared)
    & \color{mygreen} \z755.13 & \color{mygreen} 1.12 & & \color{mygreen} \z466.93 \\\cmidrule(lr){2-6}
    & $\intboxHermite_4$
    & 1928.57 & 0.44 & \multirow{2}{*}{\color{mygreen} 1.3014} & 2222.16 \\
    & $\intboxHermite_4$ (shared)
    & \z988.37 & 0.86 & & 1625.63 \\
    \midrule
    \multirow{7}{*}{grass}
    & $\intboxTaylor_2$ (baseline) & \z838.99 & 1 & 1 & \z492.87 \\\cmidrule(lr){2-6}
    & $\intboxTaylor_3$ & \z901.25 & 0.93 & 1.1993 & \z492.06 \\
    & $\intboxTaylor_4$ & \z804.90 & 1.04 & \color{mygreen} 1.2014 & \z511.20 \\\cmidrule(lr){2-6}
    & $\intboxLagrange_3$
    & 1385.02 & 0.61 & \multirow{2}{*}{1.1890} & \z807.83 \\
    & $\intboxLagrange_3$ (shared)
    & \color{mygreen} \z662.46 & \color{mygreen} 1.27 & & \color{mygreen} \z381.26 \\\cmidrule(lr){2-6}
    & $\intboxHermite_4$
    & 1608.16 & 0.52 & \multirow{2}{*}{1.2008} & 1469.37 \\
    & $\intboxHermite_4$ (shared)
    & \z669.50 & 1.25 & & \z916.84 \\
    \bottomrule
    \end{tabular}
	\caption{Performance comparison of the range functions in~\eqref{eq:5ranges} for
    the 1024 boxes obtained by uniformly subdividing the square $[-1.2,1.2]^2$.
	For each test function, we highlight in {\color{mygreen}green}
	the best values under \emph{time},
	\emph{speedup}, \emph{efficacy}, and \emph{memory} usage.}
\label{tab:LT}
\end{table}

	From the data in Table~\ref{tab:LT} we observe
    that the efficacy (tightness) is greater than $1$ for all
	the methods with superior convergence order, as expected.
	In fact, efficacy ratios lie in the range $[1.19,1.30]$.
    At least for $\intboxTaylor_3$ and $\intboxTaylor_4$, this
    improvement does not come at a significant loss in efficiency,
    since the speedups are only marginally less than $1$ in some 
    cases and often even greater than $1$ (i.e., they are a strict 
    improvement on the base line). Likewise, the memory usage is very similar
    for all three Taylor forms.
    %
    %
	Instead, for $\intboxLagrange_3$ and $\intboxHermite_4$ (without sharing), the
    tradeoff is clearly visible, both in the speedup ratios, which are in the 
    ranges $[0.52,0.61]$ and $[0.36,0.52]$, respectively, and the memory usage. 
    But speedup is roughly double (and memory roughly halved), if the data is shared.
    %
    
    We draw two general conclusions (within the limits of our test functions):
	(1) it appears that the efficacy grows with the convergence order $m$, but the improvement from $m=3$ to $m=4$ is not significant; 
    (2) the efficiency (both in terms of runtime and memory usage) 
    of the higher order Taylor forms 
    $\intboxTaylor_3$ and $\intboxTaylor_4$ is similar to 
    the base line $\intboxTaylor_2$, but the Lagrange form 
    $\intboxLagrange_3$ with data sharing is the most efficient method.

\subsection{Pairwise efficacy comparison over a landscape}


	While the previous table uses an aggregate $\totalWidth^\mathrm{X}_m$
	to measure the efficacy of $\intbox^\mathrm{X}_m$ against the base line,
	let us now compare the relative efficacy of
	two range functions
			$\intbox^\mathrm{X}_m f$ and $\intbox^\mathrm{Y}_n f$
	over the entire landscape of $1024$ boxes as follows.
	For each box $B$ in the $32\times 32$ grid of $B_0=[-1.2,1.2]^2$,
	we compute the value
		\[
		W^{\mathrm{X},\mathrm{Y}}_{m,n} (B)
        \as \log_{10} 
			\frac{w\bigl(\intbox^\mathrm{X}_m f(B)\bigr)}{w\bigl(\intbox^\mathrm{Y}_n f(B)\bigr)}.
		\]
	Thus, the first range $\intbox^\mathrm{X}_m f$ is tighter than the second $\intbox^\mathrm{Y}_n f$
	if and only if $W^{\mathrm{X},\mathrm{Y}}_{m,n}<0$.
	We can now visualize their relative tightness over the
	entire landscape of $1024$ boxes by mapping
	the values $W^{\mathrm{X},\mathrm{Y}}_{m,n}$ to a color scale that ranges
	from dark green (for the smallest value) to dark red (for the biggest value).
    In general, green hues indicate boxes for which $\intbox^\mathrm{X}_m f$ is tighter,
    while boxes for which $\intbox^\mathrm{Y}_n f$ is tighter are in red hues, and yellow hues are used for boxes where both ranges are similar.
	For example, a mostly green landscape as in Figure~\ref{fig:clover4568gT32}
	indicates that the first function is generally tighter
	than the second.
    
	We consider four such pairwise comparisons of methods (over
	all four test functions), which confirm the observations from
    Table~\ref{tab:LT}. Figure~\ref{fig:clover4568gT32} shows that
    $\intboxTaylor_3$ gives significantly tighter ranges than the
    baseline $\intboxTaylor_2$, essentially for all 1024 boxes. 
    The absence of red hues in Figure~\ref{fig:clover4568gT43} 
    further shows that $\intboxTaylor_4$ is generally tighter than
    $\intboxTaylor_3$, but the predominantly yellow landscape also
    indicates that the average improvement is not very substantial.  However, the green hues along the outlines of the clovers and grass suggest that $\intboxTaylor_4$ is tighter near
    the zero set of $f$ and would be more effective in tracing these curves.
    Figure~\ref{fig:clover4568gT3L3} shows that the two third-order
    methods $\intboxLagrange_3$ and $\intboxTaylor_3$ can both
    outperform each other in terms of tightness and neither can be
    declared a clear winner in general. The outcome seems to depend
    a lot on the local shape of the function over each box. As we 
    can see in Figure~\ref{fig:clover4568gT4H4}, a similar
    verdict holds for the two fourth-order methods $\intboxHermite_4$ 
    and~$\intboxTaylor_4$. 
    
\subsection{Experimental limitations}
	Although Theorem~\ref{theorem:CL-form-generalized}
	provides a complete CL theory that can be implemented
	in practical arithmetic models (say, bigFloat libraries),
    our implementation adopts the ``standard expedient'' of
	computing with the IEEE arithmetic model (Julia's Float64).
At present, our SLP implementation only supports
polynomial functions.
Nevertheless, the underlying theory developed in this paper
applies to general analytic functions.
	More precisely,
	to compute $\intbox f(B)=\intbox g(B)+\intbox R_g(B)$,
	we compute $\intbox g(B)$ (and similarly for $\intbox R_g(B)$)
	as follows:
	If $g(B)=[a,b]$ and $g(x,y)$ is a low degree polynomial,
	then we have exact algebraic expressions
	for $a$ and $b$ (see the Appendices).
	Then a valid range function would be
	$\intbox g(B)=[\tilde a -\varepsilon,\tilde b +\varepsilon]$
	where $\tilde a, \tilde b$ are IEEE evaluation of the expressions
	for $a,b$ and $\abs{a-\tilde{a}},\abs{b-\tilde{b}}\le\varepsilon$.  
    Although $\varepsilon$ could be
	determined for any $g$, we took the shortcut of
	ignoring this $\varepsilon$-correction.   
    Let us suppose that all these errors are less than some $\varepsilon_0$.  Is this amount of error still  ``correct''?  This question can only be answered in specific applications of these range functions.

\begin{figure}[t!]
    \includegraphics[width=1.28in,height=1.28in]{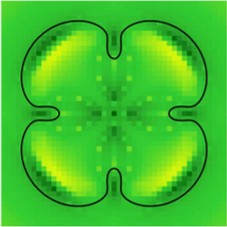}\hfill%
    \includegraphics[width=1.28in,height=1.28in]{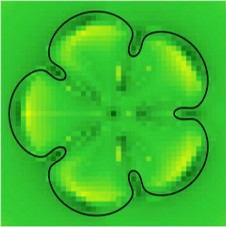}\hfill%
    \includegraphics[width=1.28in,height=1.28in]{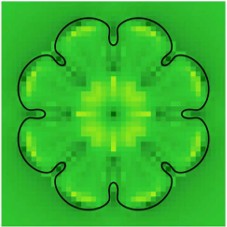}\hfill%
    \includegraphics[width=1.28in,height=1.28in]{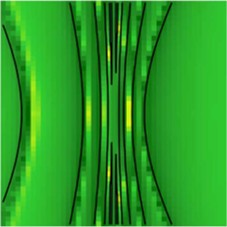}\hfill%
    \includegraphics[width=0.36in,height=1.28in]{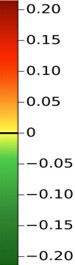}\\
    \parbox{1.28in}{\centering clover-4}\hfill%
    \parbox{1.28in}{\centering clover-5}\hfill%
    \parbox{1.28in}{\centering clover-8}\hfill%
    \parbox{1.28in}{\centering grass}\hfill\parbox{0.36in}{~}
    \caption{Efficacy comparison between $\intboxTaylor_3$ and $\intboxTaylor_2$.}
    \label{fig:clover4568gT32}
\end{figure}
	
	
\begin{figure}[t!]
    \includegraphics[width=1.28in,height=1.28in]{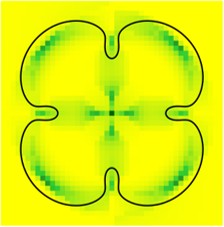}\hfill%
    \includegraphics[width=1.28in,height=1.28in]{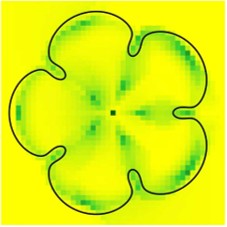}\hfill%
    \includegraphics[width=1.28in,height=1.28in]{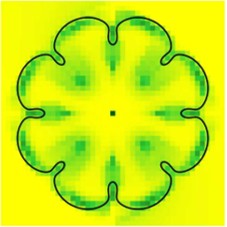}\hfill%
    \includegraphics[width=1.28in,height=1.28in]{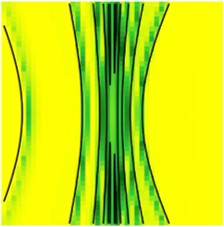}\hfill%
    \includegraphics[width=0.36in,height=1.28in]{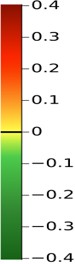}\\
    \parbox{1.28in}{\centering clover-4}\hfill%
    \parbox{1.28in}{\centering clover-5}\hfill%
    \parbox{1.28in}{\centering clover-8}\hfill%
    \parbox{1.28in}{\centering grass}\hfill\parbox{0.36in}{~}
    \caption{Efficacy comparison between $\intboxTaylor_4$ and $\intboxTaylor_3$.}
	\label{fig:clover4568gT43}
\end{figure}

	
\begin{figure}[t!]
    \includegraphics[width=1.28in,height=1.28in]{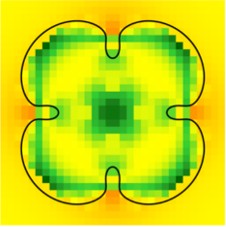}\hfill%
    \includegraphics[width=1.28in,height=1.28in]{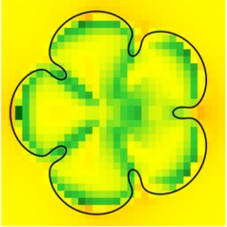}\hfill%
    \includegraphics[width=1.28in,height=1.28in]{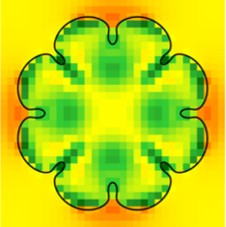}\hfill%
    \includegraphics[width=1.28in,height=1.28in]{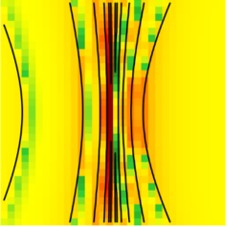}\hfill%
    \includegraphics[width=0.36in,height=1.28in]{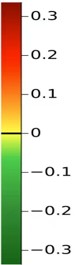}\\
    \parbox{1.28in}{\centering clover-4}\hfill%
    \parbox{1.28in}{\centering clover-5}\hfill%
    \parbox{1.28in}{\centering clover-8}\hfill%
    \parbox{1.28in}{\centering grass}\hfill\parbox{0.36in}{~}
    \caption{Efficacy comparison between $\intboxLagrange_3$ and $\intboxTaylor_3$.}
    \label{fig:clover4568gT3L3}
\end{figure}
	
	
\begin{figure}[t!]
    \includegraphics[width=1.28in,height=1.28in]{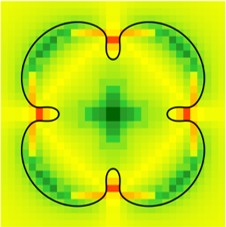}\hfill%
    \includegraphics[width=1.28in,height=1.28in]{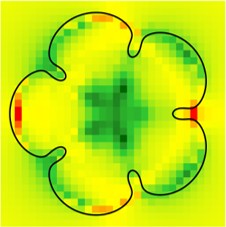}\hfill%
    \includegraphics[width=1.28in,height=1.28in]{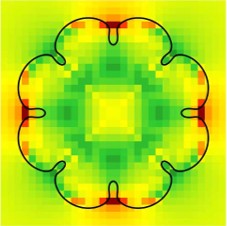}\hfill%
    \includegraphics[width=1.28in,height=1.28in]{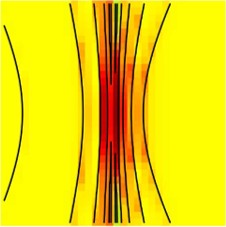}\hfill%
    \includegraphics[width=0.36in,height=1.28in]{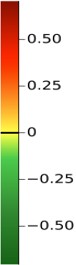}\\
    \parbox{1.28in}{\centering clover-4}\hfill%
    \parbox{1.28in}{\centering clover-5}\hfill%
    \parbox{1.28in}{\centering clover-8}\hfill%
    \parbox{1.28in}{\centering grass}\hfill\parbox{0.36in}{~}
    \caption{Efficacy comparison between $\intboxHermite_4$ and $\intboxTaylor_4$.}
    \label{fig:clover4568gT4H4}
\end{figure}

\section{Conclusion}


	This paper initiates the study of 
	range functions with superior convergence order for bivariate functions.  
	Our experimental study shows the practicality of these
	functions for cubic and quartic convergence order.
    Similar to the univariate setting \citep{Hormann:2021:NRF,Hormann:2023:RFO},
    the sweet spot, that is, the best trade-off between efficiency and efficacy, and ease of implementation, seems to be the cubic convergence order, since fourth order methods give only marginally tighter ranges, are generally slower, and more complicated to implement (see \ref{sec:range-cubic-bivariate} and \ref{sec:range-bicubic}). 
    
	We conclude with some directions for future work:
	\begin{itemize}
	\item
	Clearly, the ideas presented in this paper extend to higher convergence orders
	and to multivariate functions. In particular, Lagrange interpolation at the nodes of a regular $m\times m$ grid with $m>3$ can be used to define recursive Lagrange forms with order $m$ convergence, and similarly for Hermite interpolation of higher order partial derivatives at the corners of the box.    
    However, the
	practicality of such extensions may not be obvious
	and ought to be explored.
	\item
	This paper reports the raw performance of our range functions
	on individual boxes.  A more holistic evaluation
	would be to compare them in the context of an application. 
		As in \citet{Hormann:2021:NRF,Hormann:2023:RFO},
	our Lagrange and Hermite forms are expected to
	outshine Taylor forms in subdivision applications
	such as curve tracing \citep{Plantinga:2004:IAO,cxy,cxyz}.

	We plan to write follow-up papers, in which we will also include details
	about our SLP implementation.
    Moreover, we will extend our framework so as to handle more general functions beyond polynomials.
	\item
	The concept of levels may also be exploited in applications.
	For the root isolation application,
	\citet{Hormann:2023:RFO}
	showed empirically that for several classes of polynomials,
	for a given convergence order $m$, there is an optimal
	level $n$ ($m\le n\le d$) with optimal speedup.
	\end{itemize}

\phantomsection 
\addcontentsline{toc}{section}{\refname} 
\bibliography{references} 

@string{ascg={ACM SIGGRAPH Computer Graphics}}

@string{tog ={ACM Transactions on Graphics}}

@string{cagd={Computer Aided Geometric Design}}

@string{comp={Computing}}

@string{dcg ={Discrete \& Computational Geometry}}

@string{jsc ={Journal of Symbolic Computation}}

@string{tfq ={The Fibonacci Quaterly}}

@string{issac12={Proceedings of the 37th International Symposium on Symbolic and Algebraic Computation}}

@string{issac21={Proceedings of the 2021 ACM International Symposium on Symbolic and Algebraic Computation}}

@string{lncs="Lecture Notes in Computer Science"}

@book{griewank-walther:bk
	, author="Andreas Griewank and Andrea Walther"
	, title="Evaluating Derivatives:
		Principles And Techniques Of Algorithmic Differentiation"
	, edition="2nd"
	, series="Frontiers in Applied Mathematics"
	, publisher="SIAM"
	, remark="There is a version by Griewank alone in 2000"
	, year=2008}

@book{farin:bk
	, title="Curves and Surfaces for Computer
	      Aided Geometric Design"
	, author="Gerald Farin"
	, edition="Third"
	, publisher="Academic Press, Inc"
	, address="San Diego"
	, year=1993 }

@Article{Alladi:1977:OTN,
  author    = {Alladi, Krishnaswami and
               Hoggatt, Jr., V. E.},
  title     = {On {T}ribonacci numbers and related functions},
  journal   = tfq,
  volume    = 15,
  number    = 1,
  month     = feb,
  year      = 1977,
  pages     = {42-45},
  doi       = {10.1080/00150517.1977.12430503}
}

@Article{Cheng:2023:CNR,
  author    = {Cheng, Jin-San and 
               Wen, Junyi and 
               Zhang, Bingwei},
  title     = {Certified numerical real root isolation for bivariate nonlinear systems},
  journal   = {Journal of Symbolic Computation},
  volume    = 114,
  month     = jan # {--} # feb,
  year      = 2023,
  pages     = {149-171},
  doi       = {10.1016/j.jsc.2022.04.005},
}

@Article{Becker:2018:ANS,
  author    = {Becker, Ruben and
               Sagraloff, Michael and
               Sharma, Vikram and
               Yap, Chee},
  title     = {A near-optimal subdivision algorithm for complex root
  	isolation based on {P}ellet test and {N}ewton iteration},
  journal   = jsc,
  volume    = 86,
  month     = may # {--} # jun,
  year      = 2018,
  pages     = {51-96},
  doi       = {10.1016/j.jsc.2017.03.009}
}

@InProceedings{Bowyer:2000:IMI,
  author    = {Bowyer, Adrian and
               Berchtold, Jakob and
               Eisenthal, David and
               Voiculescu, Irina and
               Wise, Kevin},
  title     = {Interval methods in geometric modeling},
  booktitle = {Proceedings of Geometric Modeling and Processing 2000},
  editor    = {Martin, Ralph and Wang, Wenping},
  month     = apr,
  year      = 2000,
  pages     = {321-327},
  address   = {Hong Kong},
  publisher = {IEEE Computer Society},
  doi       = {10.1109/GMAP.2000.838263}
}

@Article{Cornelius:1984:CTR,
  author    = {Cornelius, Herbert and
               Lohner, Rudolf},
  title     = {Computing the range of values of real functions with accuracy higher than second order},
  journal   = comp,
  volume    = 33,
  number    = {3--4},
  month     = sep,
  year      = 1984,
  pages     = {331-347},
  doi       = {10.1007/BF02242276}
}

@Book{Courant:1989:ITC,
  author    = {Courant, Richard and
               John, Fritz},
  title     = {Introduction to Calculus and Analysis},
  volume    = {II},
  year      = 1989,
  publisher = {Springer},
  address   = {New York},
  doi       = {10.1007/978-1-4613-8958-3},
  isbn      = {978-1-4613-8960-6}
}

@Article{Duff:1992:IAA,
  author    = {Duff, Tom},
  title     = {Interval arithmetic recursive subdivision for implicit functions and constructive solid geometry},
  journal   = ascg,
  volume    = 26,
  number    = 2,
  month     = jul,
  year      = 1992,
  pages     = {131-138},
  doi       = {10.1145/142920.134027}
}

@InProceedings{Hormann:2021:NRF,
  author    = {Hormann, Kai and
               Kania, Lucas and
               Yap, Chee},
  title     = {Novel range functions via {T}aylor expansions and
		recursive {L}agrange interpolation with application to real root
  		isolation},
  booktitle = issac21,
  series    = {ISSAC'21},
  month     = jul,
  year      = 2021,
  pages     = {193-200},
  location  = {Saint Petersburg, Russia},
  publisher = {ACM},
  address   = {New York},
  doi       = {10.1145/3452143.3465532}
}

@InCollection{Hormann:2023:RFO,
  author    = {Hormann, Kai and
               Yap, Chee and
               Zhang, Ya Shi},
  title     = {Range functions of any convergence order and their
  	amortized complexity analysis},
  booktitle = {Computer Algebra in Scientific Computing},
  editor    = {Boulier, Fran\c{c}ois and
               England, Matthew and
               Kotsireas, Ilias and
               Sadykov, Timur M. and
               Vorozhtsov, Evgenii V.},
  series    = lncs,
  volume    = 14139,
  year      = 2023,
  pages     = {162-182},
  publisher = {Springer},
  address   = {Cham},
  doi       = {10.1007/978-3-031-41724-5_9}
}

@Article{Martin:2002:COI,
  author    = {Martin, Ralph and
               Shou, Huahao and
               Voiculescu, Irina and
               Bowyer, Adrian and
               Wang, Guojin},
  title     = {Comparison of interval methods for plotting algebraic
  		curves},
  journal   = cagd,
  volume    = 19,
  number    = 7,
  year      = 2002,
  pages     = {553-587},
  doi       = {10.1016/S0167-8396(02)00146-2}
}

@Article{Moessner:2009:EBF,
  author    = {M{\"o}{\ss}ner, Bernhard and
               Reif, Ulrich},
  title     = {Error bounds for polynomial tensor product interpolation},
  journal   = comp,
  volume    = 86,
  number    = {2--3},
  month     = oct,
  year      = 2009,
  pages     = {185-197},
  doi       = {10.1007/s00607-009-0062-7}
}

@Book{Moore:1979:MAA,
  author    = {Moore, Ramon E.},
  title     = {Methods and Applications of Interval Analysis},
  year      = 1979,
  series    = {Studies in Applied and Numerical Mathematics},
  number    = 2,
  publisher = {Society for Industrial and Applied Mathematics},
  address   = {Philadelphia},
  doi       = {10.1137/1.9781611970906},
  isbn      = {978-0-89871-161-5}
}

@InProceedings{Plantinga:2004:IAO,
  author    = {Plantinga, Simon and
               Vegter, Gert},
  title     = {Isotopic approximation of implicit curves and surfaces},
  booktitle = {Proceedings of the 2004 Eurographics/ACM SIGGRAPH Symposium on Geometry Processing},
  series    = {SGP'04},
  year      = 2004,
  pages     = {245-254},
  publisher = {ACM},
  address   = {New York},
  doi       = {10.1145/1057432.1057465}
}

@Book{Ratschek:1984:CMF,
  author    = {Ratschek, Helmut and
               Rokne, Jon},
  title     = {Computer Methods for the Range of Functions},
  series    = {Ellis Horwood Series in Mathematics and its Applications},
  year      = 1984,
  publisher = {Ellis Horwood Limited},
  address   = {Chichester},
  isbn      = {978-0-85312-703-1}
}

@InProceedings{Sharma:2012:NOT,
  author    = {Sharma, Vikram and
               Yap, Chee K.},
  title     = {Near optimal tree size bounds on a simple real root
  	isolation algorithm},
  booktitle = issac12,
  series    = {ISSAC'12},
  month     = jul,
  year      = 2012,
  pages     = {319-326},
  location  = {Grenoble, France},
  publisher = {ACM},
  address   = {New York},
  doi       = {10.1145/2442829.2442875}
}

@Article{Sichetti:2025:MAD,
  author    = {Sichetti, Federico and
               Puppo, Enrico and
               Huang, Zizhou and
               Attene, Marco and
               Zorin, Denis and
               Panozzo, Daniele},
  title     = {Mi{S}o: A {DSL} for Robust and Efficient Solve and Minimize Problems},
  journal   = tog,
  volume    = 44,
  number    = 4,
  month     = jul,
  year      = 2025,
  pages     = {120:1--120:18},
  doi = {10.1145/3731207},
}

@Article{Snyder:1992:IAF,
  author    = {Snyder, John M.},
  title     = {Interval analysis for computer graphics},
  journal   = ascg,
  volume    = 26,
  number    = 2,
  month     = jul,
  year      = 1992,
  pages     = {121-130},
  doi       = {10.1145/142920.134024}
}

@article{cxy
	, crossref="lin-yap:cxy:11"}

@article{lin-yap:cxy:11
    , title="Adaptive Isotopic Approximation of Nonsingular Curves: the
    	Parameterizability and Nonlocal Isotopy Approach"
    , author="Long Lin and Chee Yap"
    , journal= dcg
    , volume=45
    , number=4
    , pages="760--795"
    , comment="DOI: 10.1007/s00454-011-9345-9.	Grant: NSF Grant *977 "
    , year=2011 }

@article{cxyz
	, crossref="lin-yap-yu:cxyz:12"}

@article{lin-yap-yu:cxyz:12
    , title="Non-Local Isotopic Approximation of Nonsingular Surfaces"
    , author="Long Lin and Chee Yap and Jihun Yu"
    , journal="Computer-Aided Design"
    , month=oct
    , volume=45
    , number=2
    , pages="451-462"
    , note="Symp.~on Solid and Physical Modeling (SPM).
		U.~of Burgundy, Dijon, France, Oct 29-31, 2012."
    , year=2012 }

@misc{core:home-range
		, key="CoreLib Home"
		, title="{Range Function Project in CoreLib}"
		, note="Download of source, documentation and data:\\
			\url{https://cs.nyu.edu/exact/core\_pages/svn-core.html}.
			The username and password are both `guest'.
			You can navigate to the range function project
			(\texttt{../corelib2/trunk/progs/eval})
			or go directly to
		\url{https://subversive.cims.nyu.edu/exact/corelib2/trunk/progs/eval}.
			"
		, year="since 2019" }

\appendix

\section{Exact ranges of univariate polynomials with low degree}\label{app:A}
	\label{sec:ranges-univariate}

	We recall the following results from \citep{Hormann:2021:NRF}. Let
	$I=[a,b]$ be a real interval with \emph{radius} $r=(b-a)/2$ and
	\emph{midpoint} $m=(a+b)/2$. Given a polynomial $p$ over $I$, let
	\[
	  \alpha_0 \as \min \{ p(a), p(b) \}, \qquad
	  \beta_0  \as \max \{ p(a), p(b) \}
	\]
	be the minimal and the maximal value of $p$ at the endpoints of $I$.
\subsection{Linear polynomials}\label{sec:range-linear-univariate}
	The exact range of the linear polynomial
	\[
	  p(x) = c_0 + c_1 (x-m)
	\]
	is $p(I)=[\alpha_0,\beta_0]$, because $p$ is monotonic. The range can
	also be expressed as
	\[
	  p(I) = c_0 + r [-1,1] c_1.
	\]
\subsection{Quadratic polynomials}\label{sec:range-quadratic-univariate}
	To determine the exact range $p(I)=[\alpha,\beta]$ of the quadratic
	polynomial
	\[
	  p(x) = c_0 + c_1 (x-m) + c_2 {(x-m)}^2,
	\]
	we first set $\alpha\as\alpha_0$, $\beta\as\beta_0$. Next, we observe
	that the extremum of $p$ occurs at
	\[
	  x^\ast = m - \frac{c_1}{2c_2},
	\]
	which is inside $I$, if and only if $\abs{c_1} < 2\abs{c_2}r$. If
	$x^\ast\notin I$, then $p$ is monotonic on $I$ and our initial
	assignment of $\alpha$ and $\beta$ gives the correct range.
	Otherwise, we check the sign of $c_2$ to see if $p$ has a minimum
	($c_2>0$) or a maximum ($c_2<0$) at $x^\ast$ and accordingly replace
	$\alpha$ or $\beta$ with
	\[
	  p(x^\ast) = c_0 - \frac{c_1^2}{4c_2}.
	\]
	Note that the special case of $p$ being linear (or constant) with
	$c_2=0$ is handled correctly by this procedure.
\subsection{Cubic polynomials}\label{sec:range-cubic-univariate}
	To find the exact range $p(I)=[\alpha,\beta]$ of the cubic polynomial
	\[
	  p(x) = c_0 + c_1 (x-m) + c_2 {(x-m)}^2 + c_3 {(x-m)}^3,
	\]
	we distinguish between the case $c_3=0$, in which we use the procedure in
	\ref{sec:range-quadratic-univariate}, and the case $c_3\ne0$, in which we first set $\alpha\as\alpha_0$ and $\beta\as\beta_0$. If the discriminant
	$\Delta=c_2^2-c_1c_3$ of the quadratic equation $p'(x)=0$ is
	non-positive, then $p$ is monotonic on $I$ and the initial assignment
	of $\alpha$ and $\beta$ gives the correct range. Otherwise, $p$ has a
	local maximum at $x^+$ and a local minimum at $x^-$, where
	\[
	  x^{\pm} = m - \frac{c_2 \pm \sqrt{\Delta}}{3 c_3}.
	\]
	It remains to check if these stationary points lie inside $I$ and to
	adjust $\alpha$ and $\beta$ accordingly.
\section{Exact ranges of bivariate polynomials with low degree}\label{app:B}
	\label{sec:ranges-bivariate}
	
	Let $B=[a_x,b_x]\times[a_y,b_y]$ be a real, rectangular \emph{box}
	with \emph{midpoint} $(m_x,m_y)=(a_x+b_x,a_y+b_y)/2$ and \emph{radii}
	$(r_x,r_y)=(b_x-a_x,b_y-a_y)/2$. Given a bivariate polynomial $p$
	over $B$, let
	\begin{align*}
	  \alpha_0 &\as \min \{ p(a_x,a_y), p(a_x,b_y),
	  		p(b_x,a_y), p(b_x,b_y) \},\\
	  \beta_0  &\as \max \{ p(a_x,a_y), p(a_x,b_y),
	  		p(b_x,a_y), p(b_x,b_y) \}
	\end{align*}
	be the minimal and the maximal value of $p$ at the corners of $B$.
\subsection{Linear polynomials}\label{sec:range-linear-bivariate}
	The exact range of the linear polynomial
	\[
	  p(x,y) = c_{0,0} + c_{1,0} (x-m_x) + c_{0,1} (y-m_y)
	\]
	is $p(B)=[\alpha_0,\beta_0]$, because $p$ is monotonic in $x$ and in
	$y$. The range can also be expressed as
	\[
	  p(B) = c_{0,0} + [-1,1] (r_x\abs{c_{1,0}} + r_y\abs{c_{0,1}}),
	\]
	and if $B$ is a \emph{square} with radius $r=r_x=r_y$, then this
	simplifies to
	\[
	  p(B) = c_{0,0} + r [-1,1] (\abs{c_{1,0}} + \abs{c_{0,1}}).
	\]
	
	For example, if $p$ is the linear Taylor polynomial $T_1$ of $f$
	about the midpoint $\bfm=(m_x,m_y)$ of $B$, then the coefficients to
	be used in the procedure above are
	\[
	  c_{0,0} = f(\bfm), \qquad c_{1,0}
	  	= f^{(1,0)}(\bfm), \qquad c_{0,1} = f^{(0,1)}(\bfm).
	\]
\subsection{Quadratic polynomials}\label{sec:range-quadratic-bivariate}
	To determine the exact range $p(B)=[\alpha,\beta]$ of the quadratic
	polynomial
	\begin{align*}
	  p(x,y)
	  = c_{0,0} &+ c_{1,0} (x-m_x) + c_{0,1} (y-m_y)\\
	            &+ c_{2,0} {(x-m_x)}^2 + c_{1,1} (x-m_x)(y-m_y)
				+ c_{0,2} {(y-m_y)}^2,
	\end{align*}
	we initially set $\alpha\as\alpha_0$, $\beta\as\beta_0$ and then
	search for extrema on the boundary of $B$. To this end, we consider
	the four univariate quadratic polynomials
	\begin{alignat*}{2}
	  p_1(x) &= p(x,a_y), &\qquad x &\in I_1 = [a_x,b_x],\\
	  p_2(x) &= p(x,b_y), & x &\in I_2 = [a_x,b_x],\\
	  p_3(y) &= p(a_x,y), & y &\in I_3 = [a_y,b_y],\\
	  p_4(y) &= p(b_x,y), & y &\in I_4 = [a_y,b_y],
	\end{alignat*}
	analyse them as described in \ref{sec:range-quadratic-univariate},
	and update $\alpha$ and $\beta$ accordingly.
	
	To further check for extrema inside $B$, we compute the
	\emph{discriminant} of $p$,
	\[
	  D_p = \begin{vmatrix}
	          p_{xx} & p_{xy}\\
	          p_{yx} & p_{yy}
	        \end{vmatrix}
	      = 4 c_{2,0} c_{0,2} - c_{1,1}^2.
	\]
	If $D_p<0$, then the unique stationary point of $p$ is a saddle
	point, and if $D_p=0$, then $p$ either has no stationary points or
	there exists a line of infinitely many stationary points with a
	common critical value. In both cases we can be sure that $p$ obtains
	its minimal and maximal value on the boundary of $B$, and the exact
	range $p(B)$ is correctly determined by the previous boundary
	analysis.
	
	If $D_p>0$, then $p$ has a unique stationary point at
	\[
	  (x^\ast,y^\ast)
	  = (m_x,m_y) - \frac{(2c_{1,0}c_{0,2} - c_{0,1}c_{1,1},
	  	2c_{0,1}c_{2,0} - c_{1,0}c_{1,1})}{D_p},
	\]
	which is inside $B$, if and only if
	\[
	  \abs{2c_{1,0}c_{0,2} - c_{0,1}c_{1,1}} < D_p r_x
	  \qquad\text{and}\qquad
	  \abs{2c_{0,1}c_{2,0} - c_{1,0}c_{1,1}} < D_p r_y.
	\]
	In this case, we check the sign of $c_{2,0}$ to see if $p$ has a
	minimum ($c_{2,0}>0$) or a maximum ($c_{2,0}<0$) at $(x^\ast,y^\ast)$
	and accordingly replace $\alpha$ or $\beta$ with
	\[
	  p(x^\ast,y^\ast) = c_{0,0} - \frac{c_{1,0}^2c_{0,2}
	  	- c_{1,0}c_{0,1}c_{1,1} + c_{0,1}^2c_{2,0}}{D_p}.
	\]
	Note that the special case of $p$ being linear (or constant) with
	$c_{2,0}=c_{1,1}=c_{0,2}=0$ is handled correctly by this procedure.
	
	For example, if $p$ is the quadratic Taylor polynomial $T_2$ of $f$
	about the midpoint $\bfm=(m_x,m_y)$ of $B$, then the coefficients to
	be used in the procedure above are
	\[
	  \begin{alignedat}{3}
	    c_{0,0} &= f(\bfm), &\qquad c_{1,0}
			&= f^{(1,0)}(\bfm), &\qquad c_{0,1} &= f^{(0,1)}(\bfm),\\
	    c_{2,0} &= \tfrac12 f^{(2,0)}(\bfm), & c_{1,1}
			&= f^{(1,1)}(\bfm), & c_{0,2} &= \tfrac12 f^{(0,2)}(\bfm).
	  \end{alignedat}
	\]
\subsection{Biquadratic polynomials}\label{sec:range-biquadratic}
	The exact range of the biquadratic polynomial
	\begin{align*}
	  p(x,y)
	  = c_{0,0} &+ c_{1,0} (x-m_x) + c_{0,1} (y-m_y)\\
	            &+ c_{2,0} {(x-m_x)}^2 + c_{1,1} (x-m_x)(y-m_y)
					+ c_{0,2} {(y-m_y)}^2\\
	            &+ c_{2,1} {(x-m_x)}^2 (y-m_y) + c_{1,2} (x-m_x)
					{(y-m_y)}^2 + c_{2,2} {(x-m_x)}^2 {(y-m_x)}^2
	\end{align*}
	can only be computed numerically, since determining the stationary
	points of $p$ requires finding the roots of a polynomial of degree
	five. Instead, we propose to split $p$ into the quadratic Taylor
	polynomial $q$ of $p$ about $\bfm$ and the remainder $r=p-q$, that
	is,
	\begin{align*}
	  r(x,y)
	  = c_{2,1} {(x-m_x)}^2 (y-m_y) + c_{1,2} (x-m_x) {(y-m_y)}^2
	  	+ c_{2,2} {(x-m_x)}^2 {(y-m_x)}^2,
	\end{align*}
	and to approximate the range of $p$ by $q(B)+r(B)\supseteq p(B)$.
	While the range of $q$ can be computed as described in
	\ref{sec:range-quadratic-bivariate}, a careful analysis of $r$
	(finding the stationary points, checking the discriminant, etc.)
	reveals that it does not have isolated local extrema. The range of
	$r$ can hence be found by analyzing $r$ over the boundary of $B$, as
	explained in the first step of the procedure in
	\ref{sec:range-quadratic-bivariate}.
	
	For example, if $p$ is the biquadratic Lagrange interpolant of $f$ at
	$\{a_x,m_x,b_x\}\times\{a_y,m_y,b_y\}$, then the coefficients of $p$
	are
	\begin{gather*}
	  c_{0,0} = f_{1,1}, \qquad
	  c_{1,0} = \frac{f_{2,1}-f_{0,1}}{2 r_x}, \qquad
	  c_{0,1} = \frac{f_{1,2}-f_{1,0}}{2 r_y}, \\[\jot]
	  c_{2,0} = \frac{f_{2,1}-2f_{1,1}+f_{0,1}}{2 r_x^2}, \qquad
	  c_{1,1} = \frac{f_{2,2}-f_{0,2}-f_{2,0}+f_{0,0}}{4 r_x r_y}, \qquad
	  c_{0,2} = \frac{f_{1,2}-2f_{1,1}+f_{1,0}}{2 r_y^2}, \\[\jot]
	  c_{2,1} = \frac{f_{2,2}-2f_{1,2}+f_{0,2}-f_{2,0}+2f_{1,0}
	  		-f_{0,0}}{4 r_x^2 r_y}, \qquad
	  c_{1,2} = \frac{f_{2,2}-2f_{2,1}+f_{2,0}-f_{0,2}+2f_{0,1}
	  		-f_{0,0}}{4 r_x r_y^2}\\[\jot]
	  c_{2,2} = \frac{f_{2,2}-2f_{1,2}+f_{0,2}-2f_{2,1}+4f_{1,1}
	  		-2f_{0,1}+f_{2,0}-2f_{1,0}+f_{0,0}}
	                 {4 r_x^2 r_y^2},
	\end{gather*}
	where
	\[
	  f_{i,j} = f(m_x+(i-1)r_x, m_y+(j-1)r_y), \qquad
	  0 \le i,j \le 2.
	\]
	
\subsection{Cubic polynomials}\label{sec:range-cubic-bivariate}
	To determine the exact range $p(B)=[\alpha,\beta]$ of the cubic
	polynomial
	\begin{align*}
	  p(x,y)
	  = c_{0,0} &+ c_{1,0} (x-m_x) + c_{0,1} (y-m_y)\\
	            &+ c_{2,0} {(x-m_x)}^2 + c_{1,1} (x-m_x)(y-m_y)
					+ c_{0,2} {(y-m_y)}^2\\
	            &+ c_{3,0} {(x-m_x)}^3 + c_{2,1} {(x-m_x)}^2 (y-m_y)
	             + c_{1,2} (x-m_x) {(y-m_y)}^2 + c_{0,3} {(y-m_x)}^3,
	\end{align*}
	we initially set $\alpha\as\alpha_0$, $\beta\as\beta_0$ and then
	search for extrema on the boundary of $B$, adjusting $\alpha$ and
	$\beta$ accordingly, like in \ref{sec:range-quadratic-bivariate}, but
	taking into account that the four polynomials are cubic and have to
	be analysed with the procedure in \ref{sec:range-cubic-univariate}.
	To further check for extrema inside $B$, we have to determine the
	common roots of the first order partial derivatives of $p$. To this
	end, we express the latter as polynomials in $x$,
	\[
	  A(x) \as p^{(1,0)}(x+m_x,y+m_y) = a_0 x^2 + a_1 x + a_2, \qquad
	  B(x) \as p^{(0,1)}(x+m_x,y+m_y) = b_0 x^2 + b_1 x + b_2,
	\]
	with coefficients in $y$,
	\begin{align*}
	  a_0 &\as 3 c_{3,0}, &
	  a_1 &\as 2 c_{2,0} + 2 c_{2,1} y, &
	  a_2 &\as c_{1,0} + c_{1,1} y + c_{1,2} y^2,\\
	  b_0 &\as c_{2,1}, &
	  b_1 &\as c_{1,1} + 2 c_{1,2} y, &
	  b_2 &\as c_{0,1} + 2 c_{0,2} y + 3 c_{0,3} y^2,
	\end{align*}
	and consider the determinant of the Sylvester matrix,
	\[
	  \begin{vmatrix}
	    a_0 & a_1 & a_2 &  0  \\
	     0  & a_0 & a_1 & a_2 \\
	    b_0 & b_1 & b_2 &  0  \\
	     0  & b_0 & b_1 & b_2
	  \end{vmatrix}
	  = {(a_0 b_2 - a_2 b_0)}^2 - (a_0 b_1 - a_1 b_0) (a_1 b_2 - a_2 b_1).
	\]
	The roots of this quartic polynomial (in $y$) are the $y$-coordinates
	of the stationary points of $p$, and they can be found, for example,
	with \emph{Ferrari's method}. To find the corresponding
	$x$-coordinates, we need to substitute each of the (up to four) roots
	into the coefficients of $A(x)$ and solve for $x$. While $A$
	certainly vanishes at the resulting (up to eight) pairs of
	coordinates, we must further check that also $B(x)$ evaluates to
	zero, which reduces the number of stationary points of $p$ to at most
	four, as guaranteed by B{\'e}zout's theorem. For those stationary
	points that lie inside $B$, we finally use the second derivative test
	to find out if they are local extrema and update $\alpha$ and $\beta$
	accordingly.
	
	For example, if $p$ is the cubic Taylor polynomial $T_3$ of $f$ about
	the midpoint $\bfm=(m_x,m_y)$ of $B$, then the coefficients to be
	used in the procedure above are
	\begin{gather*}
	  \begin{alignedat}{3}
	    c_{0,0} &= f(\bfm), &\qquad c_{1,0}
			&= f^{(1,0)}(\bfm), &\qquad c_{0,1} &= f^{(0,1)}(\bfm),\\
	    c_{2,0} &= \tfrac12 f^{(2,0)}(\bfm), & c_{1,1}
			&= f^{(1,1)}(\bfm), & c_{0,2} &= \tfrac12 f^{(0,2)}(\bfm).
	  \end{alignedat}\\
	  c_{3,0} = \tfrac16 f^{(3,0)}(\bfm), \qquad
	  c_{2,1} = \tfrac12 f^{(2,1)}(\bfm), \qquad
	  c_{1,2} = \tfrac12 f^{(1,2)}(\bfm), \qquad
	  c_{0,3} = \tfrac16 f^{(0,3)}(\bfm).
	\end{gather*}
\subsection{Bicubic polynomials}\label{sec:range-bicubic}
	For a bicubic polynomial
	\begin{align*}
	  p(x,y)
	  = c_{0,0} &+ c_{1,0} (x-m_x) + c_{0,1} (y-m_y)\\
	            &+ c_{2,0} {(x-m_x)}^2 + c_{1,1} (x-m_x)(y-m_y) + c_{0,2} {(y-m_y)}^2\\
	            &+ c_{3,0} {(x-m_x)}^3 + c_{2,1} {(x-m_x)}^2 (y-m_y)
	             + c_{1,2} (x-m_x) {(y-m_y)}^2 + c_{0,3} {(y-m_x)}^3\\
	            &+ c_{3,1} {(x-m_x)}^3 (y-m_y) + c_{2,2} {(x-m_x)}^2 {(y-m_y)}^2
	             + c_{1,3} (x-m_x) {(y-m_x)}^3\\
	            &+ c_{3,2} {(x-m_x)}^3 {(y-m_y)}^2 + c_{2,3} {(x-m_x)}^2 {(y-m_y)}^3
	             + c_{3,3} {(x-m_x)}^3 {(y-m_y)}^3,
	\end{align*}
	we proceed as for biquadratic polynomials in
	\ref{sec:range-biquadratic}. We split $p$ into the cubic Taylor
	polynomial $q$ of $p$ about $\bfm$ and the remainder $r=p-q$, that
	is,
	\begin{align*}
	  r(x,y) &= c_{3,1} {(x-m_x)}^3 (y-m_y) + c_{2,2} {(x-m_x)}^2 {(y-m_y)}^2
	          + c_{1,3} (x-m_x) {(y-m_x)}^3\\
	         &+ c_{3,2} {(x-m_x)}^3 {(y-m_y)}^2 + c_{2,3} {(x-m_x)}^2 {(y-m_y)}^3
	          + c_{3,3} {(x-m_x)}^3 {(y-m_y)}^3,
	\end{align*}
	and approximate the range of $p$ by $q(B)+r(B)\supseteq p(B)$.
	While the range of $q$ can be computed as described in
	\ref{sec:range-cubic-bivariate}, a slight variation of this procedure
	can be used to determine the range of $r$. The restriction of $r$ to
	the boundary of $B$ gives four cubic polynomials, whose extrema are
	determined in the same way. To find the extrema of $r$ inside $B$, we
	observe that the first order partial derivatives of $r$ are
	\[
	  r^{(1,0)}(x+m_x,y+m_y) = y A(x), \qquad
	  r^{(0,1)}(x+m_x,y+m_y) = x B(x),
	\]
	where
	\[
	  A(x) \as a_0 x^2 + a_1 x + a_2, \qquad
	  B(x) \as b_0 x^2 + b_1 x + b_2
	\]
	are polynomials in $x$ with coefficients in $y$,
	\begin{align*}
	  a_0 &\as 3 c_{3,1} + 3 c_{3,2} y + 3 c_{3,3} y^2, &
	  a_1 &\as 2 c_{2,2} y + 2 c_{2,3} y^2, &
	  a_2 &\as c_{1,3} y^2,\\
	  b_0 &\as c_{3,1} + 2 c_{3,2} y + 3 c_{3,3} y^2, &
	  b_1 &\as 2 c_{2,2} y + 3 c_{2,3} y^2, &
	  b_2 &\as 3 c_{1,3} y^2.
	\end{align*}
	The determinant of the corresponding Sylvester matrix is $y^4 D(y)$,
	where $D$ is a quartic polynomial, leading to at most four stationary
	points of $r$, which need to be further checked. The additional
	stationary point of $r$ at $(m_x,m_y)$ can be ignored, because
	$r(m_x,m_y)=0$, and this value is already contained in the ranges of
	the boundary polynomials, because $r(x,m_y)=r(m_x,y)=0$ for any
	$x,y\in\RR$.
	
	
	For example, if $p$ is the bicubic Hermite interpolant of $f$ at the
	corners of $B$, then the coefficients of $p$ are

	\newcommand{\mycoeff}[5]{\tfrac{#2 f^{#1}_{0,0}
		#3 f^{#1}_{0,1} #4 f^{#1}_{1,0} #5 f^{#1}_{1,1}}}
	\begin{alignat*}{4}
	  c_{0,0} &=    \mycoeff{  }++++{4}\,
	          &+\,& \mycoeff{x }++--{8 r_x^{-1}}\,
	          &+\,& \mycoeff{y }+-+-{8 r_y^{-1}}\,
	          &+\,& \mycoeff{xy}+--+{16 r_x^{-1} r_y^{-1}},\\
	  c_{1,0} &=    \mycoeff{  }--++{8 r_x/3}
	          &+\,& \mycoeff{x }----{8}
	          &+\,& \mycoeff{y }-++-{16 r_x r_y^{-1}/3}
	          &+\,& \mycoeff{xy}-+-+{16 r_y^{-1}},\\
	  c_{0,1} &=    \mycoeff{  }-+-+{8 r_y/3}
	          &+\,& \mycoeff{x }-++-{16 r_y r_x^{-1}/3}
	          &+\,& \mycoeff{y }----{8}
	          &+\,& \mycoeff{xy}--++{16 r_x^{-1}},\\
	  c_{2,0} &=&&  \mycoeff{x }--++{8 r_x}&&
	          &+\,& \mycoeff{xy}-++-{16 r_x r_y^{-1}},\\
	  c_{1,1} &=    \mycoeff{  }+--+{16 r_x r_y/9}
	          &+\,& \mycoeff{x }+-+-{16 r_y/3}
	          &+\,& \mycoeff{y }++--{16 r_x/3}
	          &+\,& \mycoeff{xy}++++{16 r_x^{-1}},\\
	  c_{0,2} &=&&&&\mycoeff{y }-+-+{8 r_y}
	          &+\,& \mycoeff{xy}-++-{16 r_y r_x^{-1}},\\
	  c_{3,0} &=    \mycoeff{  }++--{8 r_x^3}
	          &+\,& \mycoeff{x }++++{8 r_x^2}
	          &+\,& \mycoeff{y }+--+{16 r_x^3 r_y^{-1}}
	          &+\,& \mycoeff{xy}+-+-{16 r_x^2 r_y^{-1}},\\
	  c_{2,1} &=&&  \mycoeff{x }+--+{16 r_x r_y/3}&&
	          &+\,& \mycoeff{xy}++--{16 r_x},\\
	  c_{1,2} &=&&&&\mycoeff{y }+--+{16 r_x r_y/3}
	          &+\,& \mycoeff{xy}+-+-{16 r_y},\\
	  c_{0,3} &=    \mycoeff{  }+-+-{8 r_y^3}
	          &+\,& \mycoeff{x }+--+{16 r_y^3 r_x^{-1}}
	          &+\,& \mycoeff{y }++++{8 r_y^2}
	          &+\,& \mycoeff{xy}++--{16 r_y^2 r_x^{-1}},\\
	  c_{3,1} &=    \mycoeff{  }-++-{16 r_x^3 r_y/3}
	          &+\,& \mycoeff{x }-+-+{16 r_x^2 r_y/3}
	          &+\,& \mycoeff{y }--++{16 r_x^3}
	          &+\,& \mycoeff{xy}----{16 r_x^2},\\
	  c_{2,2} &=&&&&&&\mycoeff{xy}+--+{16 r_x r_y},\\
	  c_{1,3} &=    \mycoeff{  }-++-{16 r_x r_y^3/3}
	          &+\,& \mycoeff{x }-+-+{16 r_y^3}
	          &+\,& \mycoeff{y }--++{16 r_x r_y^2/3}
	          &+\,& \mycoeff{xy}----{16 r_y^2},\\
	  c_{3,2} &=&&&&\mycoeff{y }-++-{16 r_x^3 r_y}
	          &+\,& \mycoeff{xy}-+-+{16 r_x^2 r_y},\\
	  c_{2,3} &=&&  \mycoeff{x }-++-{16 r_x r_y^3}&&
	          &+\,& \mycoeff{xy}--++{16 r_x r_y^2},\\
	  c_{3,3} &=    \mycoeff{  }+--+{16 r_x^3 r_y^3}
	          &+\,& \mycoeff{x }+-+-{16 r_x^2 r_y^3}
	          &+\,& \mycoeff{y }++--{16 r_x^3 r_y^2}
	          &+\,& \mycoeff{xy}++++{16 r_x^2 r_y^2},
	\end{alignat*}
	where
	\[
	  \begin{aligned}
	       f_{i,j} &= \phantom{{}^{(0,0)}}
		   		f(m_x+(2i-1)r_x, m_y+(2j-1)r_y),\\
	       f^x_{i,j} &= f^{(1,0)}(m_x+(2i-1)r_x, m_y+(2j-1)r_y),\\
	       f^y_{i,j} &= f^{(0,1)}(m_x+(2i-1)r_x, m_y+(2j-1)r_y),\\
	    f^{xy}_{i,j} &= f^{(1,1)}(m_x+(2i-1)r_x, m_y+(2j-1)r_y),
	  \end{aligned}
	\]
	for $0 \le i,j \le 1$.
\section{Additional Experiments}
\begin{table}\small\centering
  \begin{tabular}{lcl}
    \toprule
    name & degree & $f(x,y)$\\
    \midrule
    cardioid
    & $4$
    & \scalebox{1}{\parbox{245pt}{
      $\displaystyle (x^2+y^2+x)^2-(x^2+y^2)$}}\\

    lemniscate
    & $4$
    & \scalebox{1}{\parbox{245pt}{
      $\displaystyle (x^2+y^2)^2-2(x^2-y^2)$}}\\

    octic-flower
    & $8$
    & \scalebox{0.95}{\parbox{290pt}{
      $2000y^8 + 8000x^2y^6 + 12000x^4y^4 + 8000x^6y^2 + 2000x^8$\\
      $- 3000y^6 + 9000x^2y^4 - 21000x^4y^2 - 1000x^6 + 1$}}\\
    \bottomrule
  \end{tabular}
  \caption{Three additional polynomial test functions used in our numerical experiments.}
  \label{tab:test-functions-new}
\end{table}
In this section, we provide additional experimental results to further support the conclusions of the main text. Specifically, we consider the following three additional bivariate polynomial examples:

In Table~\ref{tab:LT-new}, we evaluate the five range functions of~\eqref{eq:5ranges} on the set of $1024$ boxes obtained by uniformly subdividing the corresponding square domain for each test function into a $32\times 32$ grid. As in Table~\ref{tab:LT}, we take $\intboxTaylor_2$ as the baseline method, define the speedup ratio as the ratio of the total running time of $\intboxTaylor_2$ to that of the method under consideration, and define the efficacy ratio as the ratio of the total output width of $\intboxTaylor_2$ to that of the corresponding method. Thus, a ratio greater than $1$ indicates an improvement over the baseline. The column ``time (ms)'' reports the average running time over 10 runs, measured using Julia's benchmarking tool. As before, for $\intboxLagrange_3$ and $\intboxHermite_4$ we also report the versions with shared data, since in subdivision algorithms these methods can reuse function values and derivative information across neighboring boxes, leading to improved amortized time and memory costs.

\begin{table}
\centering\small
\begin{tabular}{llcccc}
\toprule
test function & range function & time (ms) & speedup & efficacy & memory (MB) \\
\midrule
\multirow{7}{*}{cardioid}
& $\intboxTaylor_2$ (baseline) & \z68.58 & 1 & 1 & \z26.67 \\\cmidrule(lr){2-6}
& $\intboxTaylor_3$ & \z63.96 & 1.07 & 1.0710 & \z25.86 \\
& $\intboxTaylor_4$ & \z83.85 & 0.82 & 1.0712 & \z40.89 \\\cmidrule(lr){2-6}
& $\intboxLagrange_3$
& \z99.30 & 0.69 & \multirow{2}{*}{1.0703} & \z29.70 \\
& $\intboxLagrange_3$ (shared)
& \color{mygreen} \z35.61 & \color{mygreen} 1.93 & & \color{mygreen} \z9.73 \\\cmidrule(lr){2-6}
& $\intboxHermite_4$
& \z134.24 & 0.51 & \multirow{2}{*}{\color{mygreen} 1.0713} & \z84.61 \\
& $\intboxHermite_4$ (shared)
& \z55.50 & 1.24 & & \z53.09 \\
\midrule
\multirow{7}{*}{lemniscate}
& $\intboxTaylor_2$ (baseline) & \z72.83 & 1 & 1 & \z27.14 \\\cmidrule(lr){2-6}
& $\intboxTaylor_3$ & \z53.79 & 1.35 & 1.0671 & \z26.33 \\
& $\intboxTaylor_4$ & \z75.38 & 0.97 & 1.0676 & \z41.36 \\\cmidrule(lr){2-6}
& $\intboxLagrange_3$
& \z94.85 & 0.77 & \multirow{2}{*}{1.0669} & \z30.54 \\
& $\intboxLagrange_3$ (shared)
& \color{mygreen} \z35.93 & \color{mygreen} 2.03 & & \color{mygreen} \z10.12 \\\cmidrule(lr){2-6}
& $\intboxHermite_4$
& \z99.52 & 0.73 & \multirow{2}{*}{\color{mygreen} 1.0676} & \z85.04 \\
& $\intboxHermite_4$ (shared)
& \z56.72 & 1.28 & & \z53.20 \\
\midrule
\multirow{7}{*}{octic-flower}
& $\intboxTaylor_2$ (baseline) & \z277.22 & 1 & 1 & \z101.49 \\\cmidrule(lr){2-6}
& $\intboxTaylor_3$ & \z261.19 & 1.06 & 1.1581 & \z100.67 \\
& $\intboxTaylor_4$ & \z321.33 & 0.86 & 1.1604 & \z117.35 \\\cmidrule(lr){2-6}
& $\intboxLagrange_3$
& \z361.44 & 0.77 & \multirow{2}{*}{1.1562} & \z94.53 \\
& $\intboxLagrange_3$ (shared)
& \color{mygreen} \z140.09 & \color{mygreen} 1.98 & & \color{mygreen} \z37.61 \\\cmidrule(lr){2-6}
& $\intboxHermite_4$
& \z826.92 & 0.34 & \multirow{2}{*}{\color{mygreen} 1.1606} & \z523.77 \\
& $\intboxHermite_4$ (shared)
& \z511.36 & 0.54 & & \z410.90 \\
\bottomrule
\end{tabular}
\caption{Performance comparison of the range functions in~\eqref{eq:5ranges} for the 1024 boxes obtained by uniformly subdividing the corresponding square domain for each test function. The domains are $[-2.0,2.0]^2$ for cardioid, $[-1.5,1.5]^2$ for lemniscate, and $[-1.2,1.2]^2$ for octic-flower. For each test function, we highlight in {\color{mygreen}green} the best values under \emph{time}, \emph{speedup}, \emph{efficacy}, and \emph{memory} usage.}
\label{tab:LT-new}
\end{table}

From the data in Table~\ref{tab:LT-new}, we again observe that the efficacy (tightness) is greater than $1$ for all methods of higher convergence order, as expected. In this additional set of examples, the efficacy ratios lie in the range $[1.0669,1.1606]$. For $\intboxTaylor_3$, this improvement in tightness is achieved without sacrificing efficiency: its speedup ratios range from $1.06$ to $1.35$, and its memory usage remains very close to that of the baseline $\intboxTaylor_2$. For $\intboxTaylor_4$, the efficacy is slightly better than that of $\intboxTaylor_3$, but the gain is marginal, while the running time and memory usage are somewhat worse. Thus, as in the main text, the improvement from order $m=3$ to $m=4$ appears to be relatively modest.

By contrast, for $\intboxLagrange_3$ and $\intboxHermite_4$ without sharing, the tradeoff between tightness and efficiency is again clearly visible. Their efficacy is competitive and often slightly better than that of the Taylor forms of the same order, but this comes at a noticeable runtime cost, especially for $\intboxHermite_4$, whose speedup drops to $0.34$ on the octic-flower example and whose memory usage becomes substantially larger. However, once data sharing is enabled, $\intboxLagrange_3$ becomes the most efficient method overall in these experiments: its speedup ratios are close to $2$ for all three examples, while its memory usage is reduced dramatically. The shared version of $\intboxHermite_4$ also benefits substantially from reuse, although for the more complicated octic-flower example it still remains slower and considerably more memory-intensive than the other methods.

Therefore, these additional experiments lead to the same qualitative conclusions as those in Table~\ref{tab:LT}: 
(1) the efficacy generally improves with the convergence order, but the improvement from $m=3$ to $m=4$ is limited; 
(2) among the methods without sharing, $\intboxTaylor_3$ offers the best balance between tighter enclosures and computational efficiency; and 
(3) with data sharing, $\intboxLagrange_3$ is again the most efficient method in terms of both runtime and memory usage.

Next, we present the pairwise efficacy comparison plots for these additional examples. As in the main text, for each box in the corresponding $32\times 32$ grid, the color encodes the logarithm of the ratio of the widths produced by two competing range functions. Green hues indicate boxes where the first method is tighter, red hues indicate boxes where the second method is tighter, and yellow hues indicate comparable performance.

These plots are consistent with the conclusions suggested by Table~\ref{tab:LT-new}. Figure~\ref{fig:ExtraT32} shows that $\intboxTaylor_3$ is generally tighter than the baseline $\intboxTaylor_2$. Figure~\ref{fig:ExtraT43} further indicates that $\intboxTaylor_4$ is in general tighter than $\intboxTaylor_3$, although the improvement is again modest on average. Figures~\ref{fig:ExtraL3T3} and~\ref{fig:ExtraH4T4} show that the Lagrange- and Hermite-based forms can outperform the corresponding Taylor forms on some regions while being outperformed on others, so neither method can be declared a uniform pointwise winner over the entire landscape. This is fully consistent with the aggregate efficacy data in Table~\ref{tab:LT-new}, which show only a mild advantage in tightness for the non-Taylor forms, but a much stronger dependence on whether data sharing is available.
\begin{figure}[h]
    \centering

    \begin{subfigure}[t]{0.48\textwidth}
        \centering
        \includegraphics[width=\linewidth]{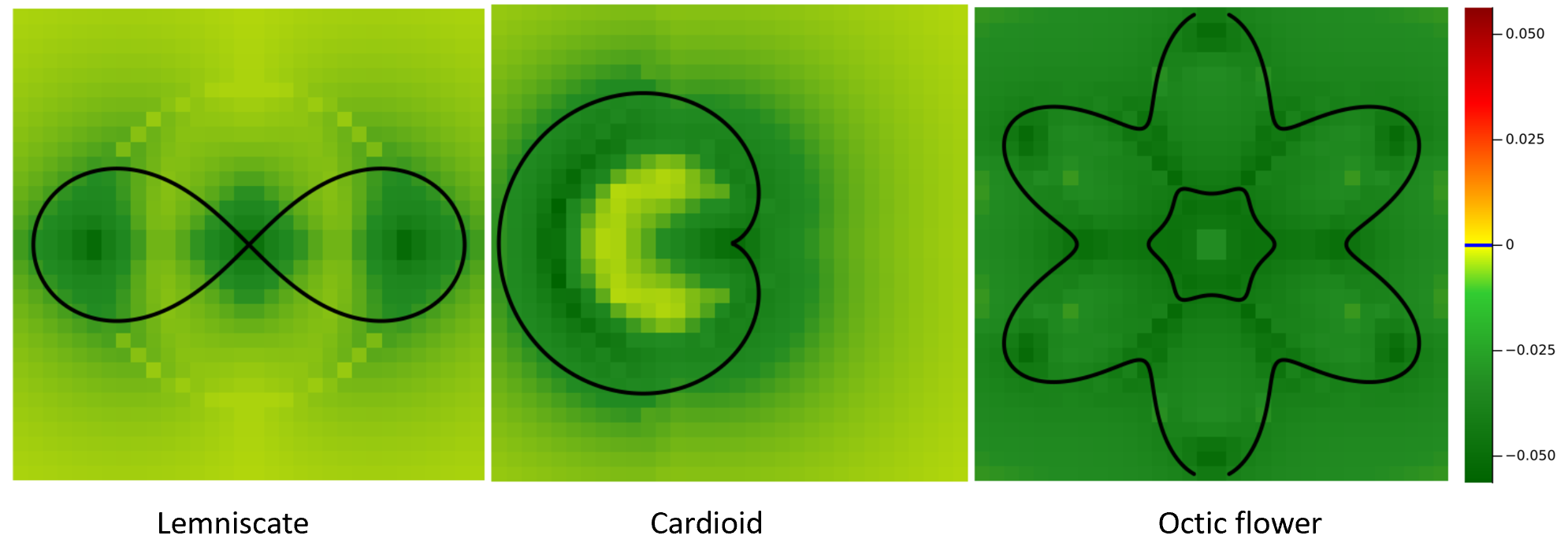}
        \caption{Efficacy comparison between $\intboxTaylor_3$ and $\intboxTaylor_2$.}
        \label{fig:ExtraT32}
    \end{subfigure}
    \hfill
    \begin{subfigure}[t]{0.48\textwidth}
        \centering
        \includegraphics[width=\linewidth]{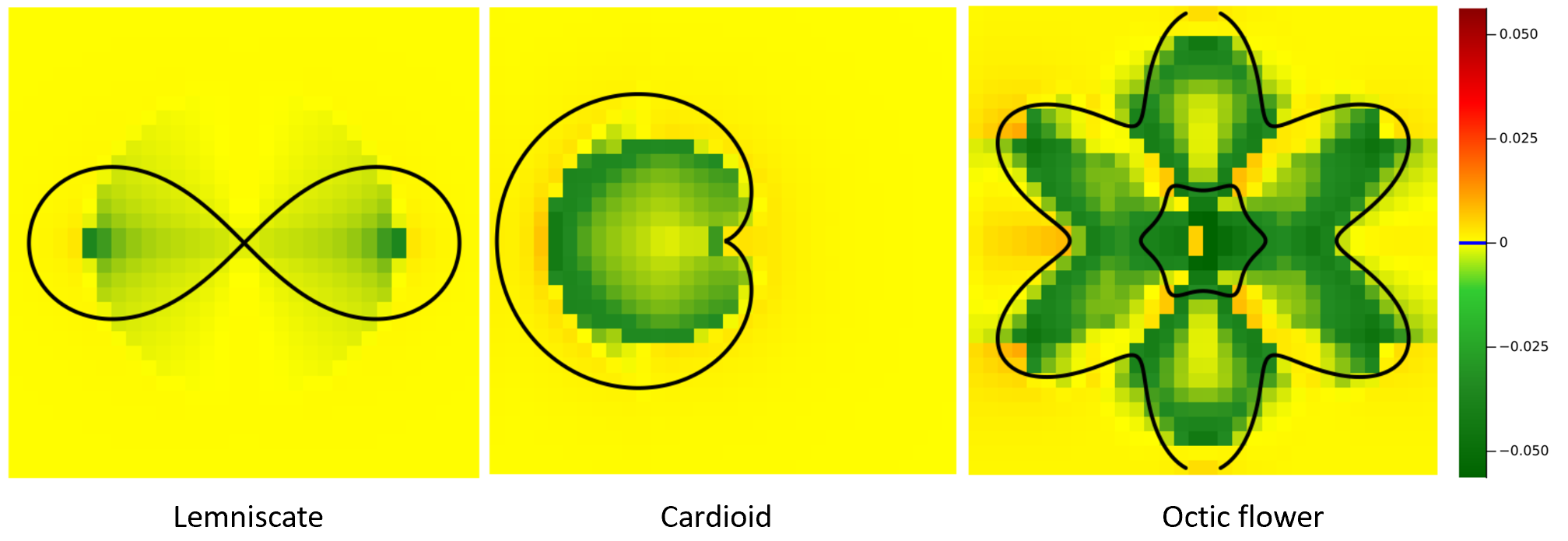}
        \caption{Efficacy comparison between $\intboxLagrange_3$ and $\intboxTaylor_3$.}
        \label{fig:ExtraL3T3}
    \end{subfigure}

    \vspace{0.8em}

    \begin{subfigure}[t]{0.48\textwidth}
        \centering
        \includegraphics[width=\linewidth]{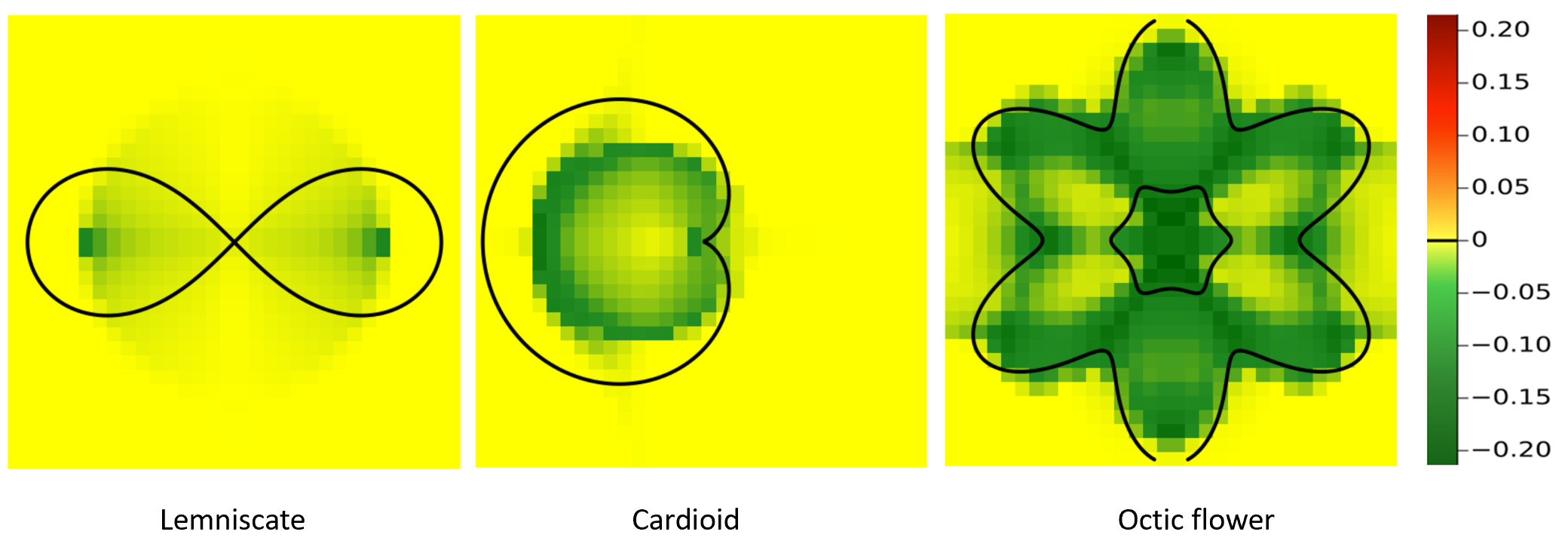}
        \caption{Efficacy comparison between $\intboxTaylor_4$ and $\intboxTaylor_3$.}
        \label{fig:ExtraT43}
    \end{subfigure}
    \hfill
    \begin{subfigure}[t]{0.48\textwidth}
        \centering
        \includegraphics[width=\linewidth]{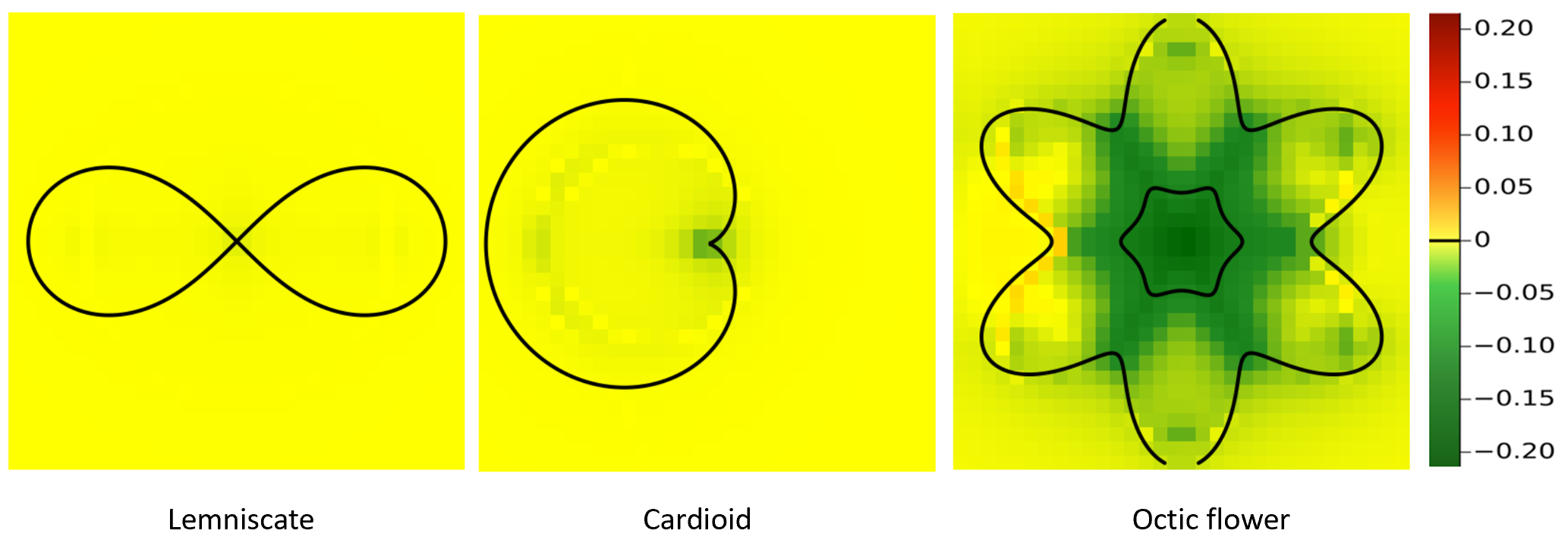}
        \caption{Efficacy comparison between $\intboxHermite_4$ and $\intboxTaylor_4$.}
        \label{fig:ExtraH4T4}
    \end{subfigure}

    \caption{Pairwise efficacy comparisons for the additional examples.}
    \label{fig:ExtraPairwise}
\end{figure}
\end{document}